  \theoremstyle{plain}
    \newtheorem{thm}{Theorem}[section]
    \newtheorem{prop}[thm]{Proposition}
    \newtheorem{corollary}[thm]{Corollary}
    \newtheorem{subsec}[thm]{}
\theoremstyle{definition}
    \newtheorem{defn}[thm]{Definition}
        \newtheorem{remark}[thm]{Remark}
\theoremstyle{remark}
\title{}
\author{}
\date{}
\begin{document}

\title{Cohomology and deformations of weighted Rota-Baxter operators}

\author{Apurba Das}
%\address{Department of Mathematics and Statistics,
%Indian Institute of Technology, Kanpur 208016, Uttar Pradesh, India.}
%\email{apurbadas348@gmail.com}

\maketitle

\begin{center}
Department of Mathematics and Statistics,\\
Indian Institute of Technology, Kanpur 208016, Uttar Pradesh, India.\\
Email: apurbadas348@gmail.com
\end{center}

%\curraddr{}
%\email{}

%\subjclass[2010]{}
%\keywords{}

\begin{abstract}
Weighted Rota-Baxter operators on associative algebras are closely related to modified Yang-Baxter equations, splitting of algebras, weighted infinitesimal bialgebras, and play an important role in mathematical physics. For any $\lambda \in {\bf k}$, we construct a differential graded Lie algebra whose Maurer-Cartan elements are given by $\lambda$-weighted relative Rota-Baxter operators. Using such characterization, we define the cohomology of a $\lambda$-weighted relative Rota-Baxter operator $T$, and interpret this as the Hochschild cohomology of a suitable algebra with coefficients in an appropriate bimodule. We study linear, formal and finite order deformations of $T$ from cohomological points of view. Among others, we introduce Nijenhuis elements that generate trivial linear deformations and define a second cohomology class to any finite order deformation which is the obstruction to extend the deformation. In the end, we also consider the cohomology of $\lambda$-weighted relative Rota-Baxter operators in the Lie case and find a connection with the case of associative algebras. 
\end{abstract}

\medskip

\noindent {\sf 2020 MSC classification:} 16E40, 16S80, 17B38.

\noindent {\sf Keywords:} Weighted Rota-Baxter operators, Maurer-Cartan elements, Cohomology, Deformations.

\medskip

%\noindent {\sf Date of resubmission:} July 26, 2021.

\thispagestyle{empty}

\tableofcontents

\vspace{0.2cm}

\section{Introduction}

Rota-Baxter operators (of weight $0$) was first appeared in the work of G. Baxter \cite{baxter} in his study of the fluctuation theory in probability. Such operators can be seen as an algebraic abstraction of the integral operator. Rota-Baxter operators on associative algebras were further studied by G.-C. Rota \cite{rota} and others \cite{atkinson,cartier,miller}. There is a close connection between Rota-Baxter operators and Yang-Baxter equation \cite{aguiar}. In last twenty years, Rota-Baxter operators have found important applications in renormalizations of quantum field theory \cite{connes}, pre-algebras \cite{bai-spl}, infinitesimal bialgebras \cite{aguiar} and double algebras \cite{gon}. See \cite{guo-book} for more on Rota-Baxter operators. The notion of relative Rota-Baxter operators (also called generalized Rota-Baxter operators or $\mathcal{O}$-operators) are a generalization of Rota-Baxter operators in the presence of a bimodule \cite{uchino}. They can be seen as a noncommutative analogue of Poisson structures. Rota-Baxter operators and relative Rota-Baxter operators also appeared in the context of Lie algebras \cite{kuper,guo-lax}. They are related to Rota-Baxter operators on Lie groups via global-infinitesimal correspondence \cite{sheng-adv,sheng-w1}. 
%See \textcolor{red}{CITE} \textcolor{red}{CITE} \textcolor{red}{CITE} \textcolor{red}{CITE} for more results on relative Rota-Baxter operators on associative and Lie algebras.

\medskip

Weighted (relative) Rota-Baxter operators are generalization of (relative) Rota-Baxter operators. They are related to post-algebras \cite{guo-lax}, weighted infinitesimal bialgebras \cite{aybe}, weighted associative Yang-Baxter equations \cite{aybe}, combinatorics of planar rooted forests \cite{forest}, and play an important role in mathematical physics \cite{guo-lax}. Some classification result of weighted Rota-Baxter operators on matrix algebras are given in \cite{guba}. See \cite{brz,das-jmp} for some other generalizations of Rota-Baxter operators.

\medskip

On the other hand, the algebraic deformation theory began with the seminal work of M. Gerstenhaber \cite{gers} for associative algebras, followed by its extension to Lie algebras by A. Nijenhuis and R. Richardson \cite{nij-ric}. These deformations are governed by suitable cohomologies (e.g. Hochschild cohomology for associative algebras and Chevalley-Eilenberg cohomology for Lie algebras). See \cite{bala} for deformations of algebras over any binary quadratic operad. Deformations of (relative) Rota-Baxter operators on Lie algebras have been developed in \cite{tang} by introducing a new cohomology theory. It has been extended to associative algebras in \cite{das-rota}.

\medskip

Recently, the authors in \cite{sheng-w1} introduced a cohomology theory for relative Rota-Baxter operators of weight $1$ on Lie algebras and studied their linear deformations. Their cohomology is given by the Chevalley-Eilenberg cohomology of a suitable Lie algebra with coefficients in an appropriate representation. This description of cohomology has lacks of information to know the structure of the cohomology ring and to study formal and finite order deformations. Our aim in this paper is to define the cohomology of a relative Rota-Baxter operator of arbitrary weight (not necessary of weight $1$) using Maurer-Cartan element in a suitable differential graded Lie algebra. This will fulfil the gaps of \cite{sheng-w1}. However, we will mainly focus on associative algebras (the Lie case is also described at the end).

\medskip

Given a fixed scalar $\lambda \in {\bf k}$, we first construct a differential graded Lie algebra whose Maurer-Cartan elements are precisely $\lambda$-weighted relative Rota-Baxter operators on associative algebras. This characterization of $\lambda$-weighted relative Rota-Baxter operators allows us to define cohomology theory for such operators. We also interpret this cohomology as the Hochschild cohomology of a suitable associative algebra with coefficients in an appropriate bimodule. Next, we study various aspects of deformations (linear, formal and finite order deformations) of $\lambda$-weighted relative Rota-Baxter operators. We introduce Nijenhuis elements associated with a $\lambda$-weighted relative Rota-Baxter operator $T$ that generate trivial linear deformations of $T$. We find a sufficient condition for the rigidity of the operator $T$ in terms of Nijenhuis elements. For a finite order deformation of $T$, we also associate a second cohomology class, which is the obstruction to extend the given deformation to a next order deformation.

\medskip

We end this paper by considering $\lambda$-weighted relative Rota-Baxter operators on Lie algebras. We define the cohomology of such operators using Maurer-Cartan characterizations in a suitable differential graded Lie algebra. When $\lambda = 1$, our cohomology coincides with the one given in \cite{sheng-w1}. Finally, we relate it with the cohomology of $\lambda$-weighted relative Rota-Baxter operators on associative algebras by suitable skew-symmetrizations.

\medskip

\noindent {\bf Organization of the paper.}  In Section \ref{sec-2}, we first recall weighted relative Rota-Baxter operators on associative algebras and prove some basic results about such operators. In Section \ref{sec-3}, we define cohomology of weighted relative Rota-Baxter operators using Maurer-Cartan characterizations. We also show that such cohomology can be interpreted as the Hochschild cohomology. Deformations of weighted relative Rota-Baxter operators are considered in Section \ref{sec-4}. Finally, in Section \ref{sec-5}, we focus on weighted relative Rota-Baxter operators on Lie algebras and define their cohomology. Relations with the case of associative algebras are also described.

\medskip

\noindent {\bf Notations.} Let $\big( \mathfrak{g} = \oplus \mathfrak{g}^n, [~,~]_\mathfrak{g}, d  \big)$ be a differential graded Lie algebra. An element $\theta \in \mathfrak{g}^1$ is said to be a Maurer-Cartan element if $\theta$ satisfies
\begin{align*}
d \theta + \frac{1}{2} [\theta, \theta]_\mathfrak{g} = 0.
\end{align*}
We denote by $\mathbb{S}_n$ the set of all permutations on the set $\{1, \ldots, n \}$. A permutation $\sigma \in \mathbb{S}_n$ is called a $(p,q)$-shuffle (with $p+q= n$) if $\sigma (1) < \cdots < \sigma (p)$ and $\sigma (p+1) < \cdots < \sigma (p+q)$. The set of all $(p,q)$-shuffles are denoted by $\mathbb{S}_{(p,q)}$. All vector spaces, (multi)linear maps, tensor products, wedge products are over a field ${\bf k}$ of characteristic zero.

\section{Weighted relative Rota-Baxter operators}\label{sec-2}

In this section, we recall weighted relative Rota-Baxter operators \cite{aybe,forest,uchino} and study some basic properties.  

Let $A$ and $B$ be two associative algebras. We denote the elements of $A$ by $a, b, a_1, a_2, \ldots $ and the elements of $B$ by $u, v, u_1, u_2, \ldots$. We also denote the multiplication in $A$ by $\cdot_A$ and the multiplication in $B$ by $\cdot_B$. Let the associative algebra $A$ acts on $B$. That is, $B$ is an $A$-bimodule \big(that consist of left and right $A$-actions $l: A \otimes B \rightarrow B,~(a, u) \mapsto l_a (u) = a \cdot u$ and $r: B \otimes A \rightarrow B,~ (u,a) \mapsto r_a (u) = u \cdot a$ with the followings $(a \cdot_A b) \cdot u = a \cdot ( b \cdot u)$, $(a \cdot u) \cdot b = a \cdot (u \cdot b)$ and $(u \cdot a) \cdot b = u \cdot ( a\cdot_A b)$ \big) satisfying additionally 
\begin{align*}
(a \cdot u) \cdot_B v = a\cdot (u \cdot_B v), \qquad (u \cdot a) \cdot_B v = u \cdot_B (a \cdot v) ~~~ \text{ and } ~~~ (u \cdot_B v) \cdot a = u  \cdot_B (v\cdot a),
\end{align*} 
for $u, v \in B$  and $a, b \in A.$ In this case, we often say that $B$ is an associative $A$-bimodule. Note that any associative algebra $A$ is an associative $A$-bimodule with left and right $A$-actions are given by the algebra multiplication.

The following result is standard and we omit the details.

\begin{prop}
Let $A$ and $B$ be two associative algebras and $B$ be an associative $A$-bimodule. Then for any $\lambda \in {\bf k}$, the direct sum $A \oplus B$ carries an associative structure given by
\begin{align*}
(a,u) \bullet_\lambda (b,v) = (a \cdot_A b ,~ a \cdot v + u \cdot b + \lambda~ u \cdot_B v), ~ \text{ for } a, b \in A \text{ and } u, v \in B.
\end{align*}
This is called the $\lambda$-weighted semidirect product and denoted by $A \ltimes_\lambda B$.
\end{prop}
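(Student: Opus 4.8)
The plan is to verify associativity directly: expand both
$\big( (a,u) \bullet_\lambda (b,v) \big) \bullet_\lambda (c,w)$
and
$(a,u) \bullet_\lambda \big( (b,v) \bullet_\lambda (c,w) \big)$
for arbitrary $(a,u), (b,v), (c,w) \in A \oplus B$, and compare the two components. The first ($A$-) component of the left-hand side is $(a \cdot_A b) \cdot_A c$ while that of the right-hand side is $a \cdot_A (b \cdot_A c)$; these agree by the associativity of $A$, so that component requires nothing further. All the content therefore lies in matching the $B$-components.

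To handle the $B$-component I would expand both sides using bilinearity of the left action, the right action, and the multiplication $\cdot_B$. Each side breaks up into exactly seven summands, which I would organize by their power of $\lambda$. The $\lambda^0$ terms are $(a \cdot_A b) \cdot w$, $(a \cdot v) \cdot c$, and $(u \cdot b) \cdot c$ on the left, to be matched against $a \cdot (b \cdot w)$, $a \cdot (v \cdot c)$, and $u \cdot (b \cdot_A c)$ on the right; these three identifications are precisely the three ordinary $A$-bimodule axioms recalled before the statement. The single $\lambda^2$ term, $(u \cdot_B v) \cdot_B w$ versus $u \cdot_B (v \cdot_B w)$, is just the associativity of $B$.

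The key point — and the only place the extra hypotheses enter — is the three $\lambda^1$ terms. I would match $(a \cdot v) \cdot_B w = a \cdot (v \cdot_B w)$, then $(u \cdot b) \cdot_B w = u \cdot_B (b \cdot w)$, and finally $(u \cdot_B v) \cdot c = u \cdot_B (v \cdot c)$; these are exactly the three additional compatibility conditions in the definition of an associative $A$-bimodule, namely $(a \cdot u) \cdot_B v = a \cdot (u \cdot_B v)$, $(u \cdot a) \cdot_B v = u \cdot_B (a \cdot v)$, and $(u \cdot_B v) \cdot a = u \cdot_B (v \cdot a)$. Summing the matched terms over all three powers of $\lambda$ shows the two $B$-components coincide, completing the verification.

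There is no genuine obstacle here; the argument is a bookkeeping exercise. The only thing to be careful about is the organization: one must check that the seven summands on each side pair off correctly and that the single bimodule axiom used in each pairing is applied in the right direction, with no summand (and no factor of $\lambda$) dropped. Grouping by powers of $\lambda$ as above makes this transparent, which is why I would present the calculation in that order rather than termwise.
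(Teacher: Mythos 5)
Your proof is correct, and it is precisely the standard verification that the paper itself omits (the paper states only ``The following result is standard and we omit the details''). Your organization by powers of $\lambda$ — the three ordinary bimodule axioms at $\lambda^0$, the three extra compatibility conditions at $\lambda^1$, and associativity of $B$ at $\lambda^2$ — correctly accounts for all seven summands on each side and uses each hypothesis exactly where it is needed.
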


\begin{defn}
\begin{itemize}
\item[(i)] Let $A$ be an associative algebra. A linear map $T: A \rightarrow A$ is said to be $\lambda$-weighted Rota-Baxter operator on $A$ if $T$ satisfies
\begin{align}
T(a) \cdot_A T (b) = T \big( T(a) \cdot_A b + a \cdot_A T(b) + \lambda~ a \cdot_A b \big), ~ \text{ for } a, b \in A.
\end{align}
\item[(ii)] Let $A$ and $B$ be two associative algebras and $B$ be an associative $A$-bimodule. A linear map $T : B \rightarrow A $ is said to be a $\lambda$-weighted relative Rota-Baxter operator on $B$ over the algebra $A$ if
\begin{align}\label{rel-rb-id}
T(u) \cdot_A T (v) = T \big( T(u) \cdot v + u \cdot T(v) + \lambda~ u \cdot_A v \big), ~ \text{ for } u, v \in B.
\end{align}
We simply call $T$ as a $\lambda$-weighted relative Rota-Baxter operator when the domain and codomain of $T$ are clear from the context.
\end{itemize}
\end{defn}

\begin{remark}\label{rel-not}
It follows from the above definitions that a $\lambda$-weighted Rota-Baxter operator on $A$ is a particular case of $\lambda$-weighted relative Rota-Baxter operator.
\end{remark}

In the following, we characterize $\lambda$-weighted relative Rota-Baxter operators in terms of their graph.

\begin{prop}
A linear map $T : B \rightarrow A$ is a $\lambda$-weighted relative Rota-Baxter operator if and only if the graph $Gr (T) = \{ (T(u), u) | ~u \in B \}$ is a subalgebra of the $\lambda$-weighted semidirect product $A \ltimes_\lambda B$.
\end{prop}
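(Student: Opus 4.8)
The plan is to prove the two implications simultaneously by unpacking directly what it means for $Gr(T)$ to be closed under the multiplication $\bullet_\lambda$ of the $\lambda$-weighted semidirect product $A \ltimes_\lambda B$. The observation that makes both directions collapse into a single computation is that $Gr(T)$ is the graph of a map: an element $(a,u) \in A \oplus B$ lies in $Gr(T)$ if and only if $a = T(u)$, so once the $B$-coordinate is fixed, membership is entirely governed by the $A$-coordinate.

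First I would take two arbitrary elements $(T(u), u)$ and $(T(v), v)$ of $Gr(T)$ and compute their product using the formula for $\bullet_\lambda$:
\begin{align*}
(T(u), u) \bullet_\lambda (T(v), v) = \big( T(u) \cdot_A T(v),~ T(u) \cdot v + u \cdot T(v) + \lambda~ u \cdot_B v \big).
\end{align*}
Writing $w := T(u) \cdot v + u \cdot T(v) + \lambda~ u \cdot_B v$ for the second coordinate, the crucial point is that $w$ is determined independently of whether $T$ satisfies any Rota-Baxter condition. Since $Gr(T)$ is additively a linear subspace, it is a subalgebra exactly when it is closed under $\bullet_\lambda$; by the graph observation, the product above lies in $Gr(T)$ if and only if its first coordinate equals $T(w)$, that is,
\begin{align*}
T(u) \cdot_A T(v) = T \big( T(u) \cdot v + u \cdot T(v) + \lambda~ u \cdot_B v \big).
\end{align*}

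This is precisely the defining identity \eqref{rel-rb-id}. Hence if $T$ is a $\lambda$-weighted relative Rota-Baxter operator, the identity holds for all $u, v \in B$, every such product lies in $Gr(T)$, and the graph is a subalgebra; conversely, if $Gr(T)$ is a subalgebra, then each product lies in $Gr(T)$, forcing the identity to hold for all $u, v \in B$, which is exactly the statement that $T$ is a $\lambda$-weighted relative Rota-Baxter operator. There is no genuine obstacle in this argument: it is a one-line verification once the product is computed. The only point deserving a word of care is the remark that the $B$-coordinate $w$ of the product does not depend on the Rota-Baxter condition, so that closure of the graph is equivalent to the equality of $A$-coordinates holding for every pair $u, v$, and nothing further needs to be checked.
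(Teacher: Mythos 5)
Your proposal is correct and follows essentially the same route as the paper: compute $(T(u),u) \bullet_\lambda (T(v),v)$, note that membership of the product in $Gr(T)$ is equivalent to its first coordinate being $T$ applied to its second, and observe that this equality is precisely the identity (\ref{rel-rb-id}). The only difference is that you spell out the graph-membership criterion explicitly, which the paper leaves implicit.
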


\begin{proof}
For any $u, v \in B$, we have
\begin{align*}
(T(u), u) \bullet_\lambda (T(v), v) = \big( T(u) \cdot_A T(v),~ T(u) \cdot v + u \cdot T(v) + \lambda~ u \cdot_B v   \big).
\end{align*}
This is in $Gr (T)$ if and only if (\ref{rel-rb-id}) holds. Hence the result follows.
\end{proof}

As a consequence of the above proposition, we get the following.
\begin{prop}
Let $T : B \rightarrow A$ be a $\lambda$-weighted relative Rota-Baxter operator. Then $B$ carries a new associative algebra structure given by
\begin{align*}
u \cdot_T v := T(u) \cdot v + u \cdot T(v) + \lambda~ u \cdot_B v, ~ \text{ for } u, v \in B.
\end{align*}
\end{prop}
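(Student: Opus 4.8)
The plan is to deduce the associativity of $\cdot_T$ from the previous proposition rather than by brute force, by transporting the algebra structure along the graph. First I would introduce the linear map $\phi : B \rightarrow A \ltimes_\lambda B$, $\phi(u) = (T(u), u)$. This map is clearly injective, and its image is exactly $Gr(T)$; hence $\phi$ is a linear isomorphism of $B$ onto the subspace $Gr(T)$, which by the previous proposition is a subalgebra of $A \ltimes_\lambda B$.

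The one computation that uses the hypothesis is to identify the product $\bullet_\lambda$ on $Gr(T)$ in terms of $\cdot_T$. Expanding the definition of $\bullet_\lambda$ gives
\begin{align*}
\phi(u) \bullet_\lambda \phi(v) = (T(u), u) \bullet_\lambda (T(v), v) = \big( T(u)\cdot_A T(v),~ u \cdot_T v \big),
\end{align*}
since the second coordinate $T(u) \cdot v + u \cdot T(v) + \lambda~ u \cdot_B v$ is by definition $u \cdot_T v$. The relative Rota-Baxter identity (\ref{rel-rb-id}) then rewrites the first coordinate as $T(u)\cdot_A T(v) = T(u \cdot_T v)$, so that
\begin{align*}
\phi(u) \bullet_\lambda \phi(v) = (T(u \cdot_T v),~ u \cdot_T v) = \phi(u \cdot_T v).
\end{align*}
Thus $\phi$ intertwines the product $\cdot_T$ on $B$ with the associative product $\bullet_\lambda$ on $Gr(T)$.

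It remains to conclude. Since $(Gr(T), \bullet_\lambda)$ is associative and $\phi$ is a linear bijection with $\phi(u \cdot_T v) = \phi(u)\bullet_\lambda \phi(v)$, the product $\cdot_T$ is the pullback of an associative product along a bijection, hence associative; explicitly, applying the linear isomorphism $\phi^{-1}$ to the identity $\big(\phi(u)\bullet_\lambda \phi(v)\big)\bullet_\lambda \phi(w) = \phi(u)\bullet_\lambda\big(\phi(v)\bullet_\lambda \phi(w)\big)$ yields $(u\cdot_T v)\cdot_T w = u\cdot_T(v\cdot_T w)$.

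I do not expect a genuine obstacle here: the Rota-Baxter hypothesis enters only in matching the first coordinate above, and the rest is the formal transport of an associative structure across an isomorphism. The alternative of verifying $(u\cdot_T v)\cdot_T w = u\cdot_T(v\cdot_T w)$ by directly expanding both sides would also succeed, but it is far less transparent, requiring repeated use of the bimodule compatibility axioms together with (\ref{rel-rb-id}) to reconcile the numerous resulting terms; the graph argument is preferable precisely because it packages all of that bookkeeping into the single fact that $Gr(T)$ is a subalgebra.
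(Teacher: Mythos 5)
Your proposal is correct and follows exactly the paper's route: the paper presents this proposition as an immediate consequence of the graph characterization, i.e., the associative structure on the subalgebra $Gr(T) \subset A \ltimes_\lambda B$ is transported to $B$ along the linear bijection $u \mapsto (T(u), u)$, whose inverse is the projection onto the second factor. Your write-up simply makes explicit the intertwining computation (using the identity (\ref{rel-rb-id}) to match the first coordinate) that the paper leaves to the reader.
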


\medskip

Next, we consider $\lambda$-weighted modified associative Yang-Baxter equation (modified AYBE$_\lambda$) as an associative analogue of the modified Yang-Baxter equation considered in \cite{guo-lax}. We find its connection with $\lambda$-weighted Rota-Baxter operators.

\begin{defn}
Let $A$ be an associative algebra. For a linear map $R : A \rightarrow A$, the equation
\begin{align*}
R(a) \cdot_A R (b) = R \big( R(a) \cdot_A b + a \cdot_A R(b) \big) - \lambda^2 ~ a \cdot_A b, \text{ for } a, b \in A
\end{align*}
is called the  $\lambda$-weighted modified associative Yang-Baxter equation (modified AYBE$_\lambda$).
\end{defn}

\begin{prop}\label{rb-myb}
Let $A$ be an associative algebra. Then there is a one-to-one correspondence between solutions of modified AYBE$_\lambda$ and $\lambda$-weighted Rota-Baxter operators on $A$.
\end{prop}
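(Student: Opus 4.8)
The plan is to exhibit an explicit affine bijection between the two sets of operators and then verify that it matches the two defining equations term by term. Guided by the analogous correspondence between Rota-Baxter operators and the modified Yang-Baxter equation in the Lie setting, I would guess the transformation
\begin{align*}
R = 2T + \lambda\, \Id_A, \qquad \text{with inverse} \qquad T = \tfrac{1}{2}\big( R - \lambda\, \Id_A \big).
\end{align*}
Since the base field ${\bf k}$ has characteristic zero, the scalar $2$ is invertible, so $T \mapsto 2T + \lambda\, \Id_A$ is a bijection on the space of linear endomorphisms of $A$, with the stated inverse. Establishing that this assignment is a bijection on the underlying linear maps is immediate; the content of the proposition is that it carries solutions of one equation to solutions of the other.

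First I would substitute $R = 2T + \lambda\, \Id_A$ into the left-hand side of the modified AYBE$_\lambda$ and expand, obtaining
\begin{align*}
R(a) \cdot_A R(b) = 4\, T(a) \cdot_A T(b) + 2\lambda\, T(a) \cdot_A b + 2\lambda\, a \cdot_A T(b) + \lambda^2\, a \cdot_A b.
\end{align*}
Next I would compute the inner argument $R(a) \cdot_A b + a \cdot_A R(b) = 2\, T(a)\cdot_A b + 2\, a \cdot_A T(b) + 2\lambda\, a \cdot_A b$, apply $R = 2T + \lambda\, \Id_A$ to it, and subtract $\lambda^2\, a \cdot_A b$ to get the right-hand side of the modified AYBE$_\lambda$. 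The key step is that the two linear-in-$T$ terms $2\lambda\, T(a)\cdot_A b + 2\lambda\, a\cdot_A T(b)$ and the scalar term $\lambda^2\, a \cdot_A b$ appear identically on both sides and cancel, leaving exactly
\begin{align*}
4\, T(a) \cdot_A T(b) = 4\, T\big( T(a) \cdot_A b + a \cdot_A T(b) + \lambda\, a \cdot_A b \big),
\end{align*}
which, after dividing by $4$, is precisely the $\lambda$-weighted Rota-Baxter identity. Running the computation in reverse shows the converse.

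Since every manipulation above is an equivalence (the cancellations are exact and $4$ is invertible), $R$ is a solution of the modified AYBE$_\lambda$ if and only if $T = \tfrac{1}{2}(R - \lambda\, \Id_A)$ is a $\lambda$-weighted Rota-Baxter operator, which together with the bijectivity of the transformation yields the claimed one-to-one correspondence. I do not anticipate a genuine obstacle here: the only subtle point is identifying the correct affine shift $R = 2T + \lambda\, \Id_A$, and once that is in hand the verification is a routine, if slightly lengthy, expansion using associativity and bilinearity of $\cdot_A$. The characteristic-zero hypothesis is used only to invert the factor $2$, and I would flag this explicitly.
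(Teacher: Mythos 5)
Your proposal is correct and follows essentially the same route as the paper: the paper's proof also uses the affine change of variables $R = \lambda\,\mathrm{id}_A + 2T$, expands $R(a)\cdot_A R(b)$ and $R\big(R(a)\cdot_A b + a\cdot_A R(b)\big) - \lambda^2\, a\cdot_A b$ in terms of $T$, and observes that the two sides agree precisely when the $\lambda$-weighted Rota--Baxter identity holds. Your write-up is in fact slightly more careful on two points the paper glosses over, namely making the converse direction explicit via the equivalence of the cancellations and flagging that invertibility of $2$ (guaranteed in characteristic zero) is what makes $T \mapsto 2T + \lambda\,\mathrm{id}_A$ a bijection.
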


\begin{proof}
Let $T$ be a $\lambda$-weighted Rota-Baxter operator on $A$. Take $R = \lambda \mathrm{id}_A + 2 T$. We observe that
\begin{align}\label{modified1}
R(a) \cdot_A R(b) = \lambda^2~ a \cdot_A b + 2 \lambda \big( T(a) \cdot_A b + a \cdot_A T(b) \big) + 4 ~T(a) \cdot_A T(b).
\end{align}
On the other hand, by a direct calculation
\begin{align}\label{modified2}
&R \big(   R(a) \cdot_A b + a \cdot_A R(b) \big) - \lambda^2 ~ a \cdot_A b \nonumber \\
&= \lambda^2~ a \cdot_A b + 2 \lambda \big( T(a) \cdot_A b + a \cdot_A T(b) \big)  + 4 ~T \big(  T(a) \cdot_A b + a \cdot_A T(b) + \lambda~ a \cdot_A b  \big).
\end{align}
Since $T$ is a $\lambda$-weighted Rota-Baxter operator on $A$, it follows from (\ref{modified1}) and (\ref{modified2}) that $R$ is a solution of modified AYBE$_\lambda$. Conversely, if $R$ is a solution of modified AYBE$_\lambda$, then it is easy to see that $T = \frac{1}{2} (R - \lambda \mathrm{id}_A)$ is a $\lambda$-weighted Rota-Baxter operator on $A$. This completes the proof.
\end{proof}

Let $A$ be an unital associative algebra with unit $1 \in A$. Let $r = r_{(1)} \otimes r_{(2)}$ be an element in $A\otimes A$. Here we use the Sweedler notation. We define three elements of $A \otimes A \otimes A$, namely,
\begin{align*}
r_{12} = r_{(1)} \otimes r_{(2)} \otimes 1, \qquad r_{13} = r_{(1)} \otimes 1 \otimes r_{(2)} ~~~~ \text{ and } ~~~~ r_{23} = 1 \otimes r_{(1)} \otimes r_{(2)}.
\end{align*} 

\begin{defn}
An element $r = r_{(1)} \otimes r_{(2)} \in A \otimes A$ is said to be a $\lambda$-weighted associative Yang-Baxter solution if the following holds
\begin{align}\label{w-aybe}
r_{13} r_{12} - r_{12} r_{23} + r_{23} r_{13} = \lambda r_{13}.
\end{align}
\end{defn}

We have the following relation between weighted associative Yang-Baxter solutions and weighted Rota-Baxter operators.

\begin{prop}
Let $A$ be an unital associative algebra and $r = r_{(1)} \otimes r_{(2)} \in A \otimes A$ be a $\lambda$-weighted associative Yang-Baxter solution. Then the map $T : A \rightarrow A$ defined by $T(a) = r_{(1)} \cdot_A a \cdot_A r_{(2)}$, is a $(-\lambda)$-weighted Rota-Baxter operator on $A$.
\end{prop}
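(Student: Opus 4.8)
The plan is to deduce the $(-\lambda)$-weighted Rota-Baxter identity for $T$ directly from the Yang-Baxter equation \eqref{w-aybe}, by pushing that tensor identity through a single linear map $A \otimes A \otimes A \to A$. Throughout I write the Sweedler notation as an explicit finite sum $r = \sum_i x_i \otimes y_i$, so that $T(a) = \sum_i x_i \cdot_A a \cdot_A y_i$ for all $a \in A$. Note that unitality of $A$ is essential here, since the elements $r_{12}, r_{13}, r_{23}$ all involve the unit $1 \in A$.

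First I would expand the three products occurring in \eqref{w-aybe} using the componentwise multiplication of the tensor-product algebra $A \otimes A \otimes A$. Using two independent copies of $r$ and suppressing the symbol $\cdot_A$, one finds
\begin{align*}
r_{13}\, r_{12} = \sum_{i,j} x_i x_j \otimes y_j \otimes y_i, \qquad r_{12}\, r_{23} = \sum_{i,j} x_i \otimes y_i x_j \otimes y_j, \qquad r_{23}\, r_{13} = \sum_{i,j} x_j \otimes x_i \otimes y_i y_j,
\end{align*}
together with $r_{13} = \sum_i x_i \otimes 1 \otimes y_i$.

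Next I would introduce, for each fixed pair $a, b \in A$, the linear map $\Phi_{a,b} : A \otimes A \otimes A \to A$ defined by $\Phi_{a,b}(p \otimes q \otimes s) = p \cdot_A a \cdot_A q \cdot_A b \cdot_A s$, and apply it to both sides of \eqref{w-aybe}. Since $\Phi_{a,b}$ is linear it suffices to evaluate it on the four terms above, and using $T(a) = \sum_i x_i \cdot_A a \cdot_A y_i$ one checks that
\begin{align*}
\Phi_{a,b}(r_{13}\, r_{12}) = T\big(T(a) \cdot_A b\big), \quad \Phi_{a,b}(r_{12}\, r_{23}) = T(a) \cdot_A T(b), \quad \Phi_{a,b}(r_{23}\, r_{13}) = T\big(a \cdot_A T(b)\big),
\end{align*}
while the unit inside $r_{13}$ collapses to give $\Phi_{a,b}(\lambda\, r_{13}) = \lambda\, T(a \cdot_A b)$. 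Substituting these evaluations into the image of \eqref{w-aybe} and rearranging with the linearity of $T$ produces exactly $T(a) \cdot_A T(b) = T\big( T(a) \cdot_A b + a \cdot_A T(b) - \lambda\, a \cdot_A b \big)$, the defining relation of a $(-\lambda)$-weighted Rota-Baxter operator. The only real obstacle is the index bookkeeping needed to keep the two copies of $r$ distinct and to match the four terms after relabelling, which I expect to be entirely routine.
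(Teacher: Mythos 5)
Your proposal is correct and follows essentially the same route as the paper: the paper also rewrites \eqref{w-aybe} with two Sweedler copies of $r$ and then "replaces the first tensor product by multiplication by $a$ and the second by multiplication by $b$," which is precisely your map $\Phi_{a,b}(p \otimes q \otimes s) = p \cdot_A a \cdot_A q \cdot_A b \cdot_A s$. Your write-up just makes this evaluation map explicit, and your term identifications and the final rearrangement into $T(a)\cdot_A T(b) = T\big(T(a)\cdot_A b + a \cdot_A T(b) - \lambda\, a\cdot_A b\big)$ match the paper's computation exactly.
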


\begin{proof}
Note that the identity (\ref{w-aybe}) can be written as
\begin{align*}
r_{(1)} \cdot_A \widetilde{r}_{(1)} \otimes \widetilde{r}_{(2)} \otimes r_{(2)} ~-~ r_{(1)} \otimes r_{(2)} \cdot_A \widetilde{r}_{(1)} \otimes \widetilde{r}_{(2)}~ +~ \widetilde{r}_{(1)} \otimes r_{(1)} \otimes r_{(2)} \cdot_A \widetilde{r}_{(2)} = \lambda~ r_{(1)} \otimes 1 \otimes r_{(2)},
\end{align*}
where $r_{(1)} \otimes r_{(2)}$ and $\widetilde{r}_{(1)} \otimes \widetilde{r}_{(2)}$ denote two copies of $r$. In the above identity, replacing the first tensor product by multiplication of $a$ and the second tensor product by multiplication of $b$, and using the definition of $R$, we get
\begin{align*}
T ( T(a) \cdot_A b ) - T(a) \cdot_A T(b) + T ( a \cdot_A T(b)) = \lambda T( a \cdot_A b).
\end{align*}
This shows that $T$ is a $(-\lambda)$-weighted Rota-Baxter operator on $A$.
\end{proof}

\medskip

Let $A$ and $B$ be two associative algebras and $B$ be an associative $A$-bimodule. Let $T, T' : B \rightarrow A$ be two $\lambda$-weighted relative Rota-Baxter operators. 

\begin{defn}
A morphism from $T$ to $T'$ consists of a pair $(\phi, \psi)$ of associative algebra morphisms $\phi : A \rightarrow A$ and $\psi : B \rightarrow B$ satisfying
\begin{align*}
\phi \circ T = T' \circ \psi , \quad \psi ( a \cdot u) = \phi (a) \cdot \psi (u)  ~~~ \text{ and } ~~~ \psi ( u \cdot a) = \psi (u) \cdot \phi (a), ~ \text{ for } a \in A, u \in B.
\end{align*}
\end{defn}

The set of all $\lambda$-weighted relative Rota-Baxter operators and morphisms between them forms a category, denoted by $\mathsf{rRB}_\lambda (B,A)$.

\begin{prop}
Let $(\phi, \psi)$ be a morphism of $\lambda$-weighted relative Rota-Baxter operators from $T$ to $T'$. Then $\psi : B \rightarrow B$ is a morphism of induced associative algebras from $(B, \cdot_T)$ to $(B, \cdot_{T'})$.
\end{prop}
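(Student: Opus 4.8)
The plan is to verify directly the single defining identity for a morphism of associative algebras, namely that $\psi(u \cdot_T v) = \psi(u) \cdot_{T'} \psi(v)$ for all $u, v \in B$, where $\cdot_T$ and $\cdot_{T'}$ are the induced products from the preceding proposition. Everything reduces to expanding both sides and matching terms using the three compatibility conditions that define the morphism $(\phi, \psi)$, together with the fact that $\psi$ is itself an algebra morphism on $(B, \cdot_B)$.

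First I would expand the left-hand side. By the definition of $\cdot_T$ and the linearity of $\psi$,
\begin{align*}
\psi(u \cdot_T v) = \psi(T(u) \cdot v) + \psi(u \cdot T(v)) + \lambda~ \psi(u \cdot_B v).
\end{align*}
The three summands are handled by three distinct properties. The first term involves a left $A$-action on $B$, so $\psi(T(u) \cdot v) = \phi(T(u)) \cdot \psi(v)$ by the condition $\psi(a \cdot u) = \phi(a) \cdot \psi(u)$; the second term involves a right $A$-action, so $\psi(u \cdot T(v)) = \psi(u) \cdot \phi(T(v))$ by the condition $\psi(u \cdot a) = \psi(u) \cdot \phi(a)$; and the third term uses that $\psi$ is an algebra morphism for $\cdot_B$, giving $\psi(u \cdot_B v) = \psi(u) \cdot_B \psi(v)$.

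Next I would apply the intertwining relation $\phi \circ T = T' \circ \psi$ to replace $\phi(T(u))$ by $T'(\psi(u))$ and $\phi(T(v))$ by $T'(\psi(v))$. This rewrites the expression as
\begin{align*}
T'(\psi(u)) \cdot \psi(v) + \psi(u) \cdot T'(\psi(v)) + \lambda~ \psi(u) \cdot_B \psi(v),
\end{align*}
which is precisely $\psi(u) \cdot_{T'} \psi(v)$ by the definition of $\cdot_{T'}$. This completes the verification.

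The computation is entirely routine, and there is no genuine obstacle. The only point requiring care is the bookkeeping: one must correctly identify each of the three summands as a left action, a right action, or the multiplication on $B$, and thereby invoke the appropriate one of the three defining conditions of the morphism. Once the three conditions and the intertwining relation are applied in the right places, the two sides coincide term by term.
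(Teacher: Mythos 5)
Your proof is correct and is essentially identical to the paper's own argument: both expand $\psi(u \cdot_T v)$ by linearity, apply the two action-compatibility conditions and the multiplicativity of $\psi$ on $(B,\cdot_B)$, and then use $\phi \circ T = T' \circ \psi$ to recognize the result as $\psi(u) \cdot_{T'} \psi(v)$. No gaps; nothing to add.
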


\begin{proof}
For any $u, v \in B$, we have
\begin{align*}
\psi ( u \cdot_T v) =~& \psi \big( T(u) \cdot v + u \cdot T(v) + \lambda ~ u \cdot_B v \big) \\
=~& \phi (T(u)) \cdot \psi (v) + \psi (u) \cdot \phi (T(v)) + \lambda  ~ \psi(u) \cdot \psi (v)  \\
=~& T' (\psi (u)) \cdot \psi (v) + \psi (u ) \cdot T' (\psi (v)) + \lambda ~ \psi (u) \cdot_B \psi (v)  = \psi (u) \cdot_{T'} \psi (v).
\end{align*}
Hence the result follows.
\end{proof}

\section{Cohomology of $\lambda$-weighted relative Rota-Baxter operators}\label{sec-3}

The aim of this section is to provide Maurer-Cartan characterization of $\lambda$-weighted relative Rota-Baxter operators and define the cohomology of such operators.

\subsection{Maurer-Cartan characterization and Cohomology}\label{subsection31}

In this subsection, we construct a differential graded Lie algebra whose Maurer-Cartan elements are precisely $\lambda$-weighted relative Rota-Baxter operators. Using this characterization, we define the cohomology of a  $\lambda$-weighted relative Rota-Baxter operator.

We start by recalling the Gerstenhaber bracket \cite{gers2}. Let $V$ be a vector space and consider the graded space $\oplus_{n \geq 1} \mathrm{Hom} (V^{\otimes n}, V)$ of multilinear maps on $V$. It carries a degree $-1$ graded Lie bracket (called the Gerstenhaber bracket) given by
\begin{align*}
[f, g]_\mathsf{G} := \sum_{i=1}^m (-1)^{(i-1)(n-1)} ~f \circ_i g - (-1)^{(m-1)(n-1)} \sum_{i=1}^n (-1)^{(i-1)(m-1)}~ g \circ_i f,
\end{align*}
for $f \in \mathrm{Hom} (V^{\otimes m}, V)$ and $g \in \mathrm{Hom} (V^{\otimes n}, V)$, where
\begin{align*}
(f \circ_i g) ( v_1, \ldots, v_{m+n-1}) = f ( v_1, \ldots, v_{i-1}, g (v_i, \ldots, v_{i+n-1}), v_{i+n}, \ldots, v_{m+n-1}).
\end{align*}
In other words, $\big(  \oplus_{n \geq 0} \mathrm{Hom} (V^{\otimes n + 1}, V) , [~,~]_\mathsf{G} \big)$ is a graded Lie algebra. 
Note that a multiplication $\mu \in \mathrm{Hom} (V^{\otimes 2}, V)$ defines an associative structure on $V$ if and only if $[\mu, \mu]_\mathsf{G} = 0$.

\medskip

Let $A$ and $B$ be two associative algebras and $B$ be an associative $A$-bimodule. We use the notations $\mu_A$ and $\mu_B$ for associative  multiplications on $A$ and $B$, and  $l, r$ for left and right $A$-actions on $B$, respectively. Take $V = A \oplus B$ and consider the graded Lie algebra $\big( \oplus_{n \geq 0} \mathrm{Hom} (V^{\otimes n+1}, V), [~,~]_\mathsf{G} \big)$. Then it is easy to check that the graded subspace $\mathfrak{a} = \oplus_{n \geq 0} \mathrm{Hom} (B^{\otimes n+1}, A)$ is an abelian subalgebra. Moreover, we have the following observations.

\medskip

\medskip

\noindent {\bf Observation I.}

The element $\mu_A + l + r \in \mathrm{Hom}(V^{\otimes 2}, V)$ is a Maurer-Cartan element in the graded Lie algebra $\big( \oplus_{n \geq 0} \mathrm{Hom} (V^{\otimes n+1}, V), [~,~]_\mathsf{G} \big)$. Hence it induces a differential $d_{\mu_A + l + r} := [\mu_A + l + r, -]_\mathsf{G}$ on $\oplus_{n \geq 0} \mathrm{Hom} (V^{\otimes n+1}, V)$. Therefore, by the derived bracket construction \cite{voro}, the shifted space $\mathfrak{a} [-1] = \oplus_{n \geq 1} \mathrm{Hom} (B^{\otimes n}, A)$ 
carries a graded Lie bracket (called the derived bracket) given by
\begin{align*}
\llbracket P, Q \rrbracket := (-1)^m ~ [ d_{\mu_A + l + r}  (P), Q]_\mathsf{G} = (-1)^m~ [ ~[\mu_A + l + r, P]_\mathsf{G}, Q ]_\mathsf{G},
\end{align*}
for $P \in \mathrm{Hom}(B^{\otimes m}, A)$ and $Q \in \mathrm{Hom}(B^{\otimes n}, A)$. The explicit formula \cite{das-rota} is given by
\begin{align}\label{dgla-b}
&\llbracket P, Q \rrbracket (u_1, \ldots, u_{m+n}) \\
&=  \sum_{i=1}^m (-1)^{(i-1)n}~ P ( u_1, \ldots, u_{i-1}, Q (u_i, \ldots, u_{i+n-1}) \cdot u_{i+n}, \ldots, u_{m+n}) \nonumber \\
&- \sum_{i=1}^m (-1)^{in} ~ P (u_1, \ldots, u_{i-1}, u_i \cdot Q (u_{i+1}, \ldots, u_{i+n}), u_{i+n+1}, \ldots, u_{m+n})  \nonumber \\
&- (-1)^{mn} \bigg\{   \sum_{i=1}^n (-1)^{(i-1)m}~ Q ( u_1, \ldots, u_{i-1}, P (u_i, \ldots, u_{i+m-1}) \cdot u_{i+m}, \ldots, u_{m+n})      \nonumber \\
&- \sum_{i=1}^n (-1)^{im} ~ Q (u_1, \ldots, u_{i-1}, u_i \cdot P (u_{i+1}, \ldots, u_{i+m}), u_{i+m+1}, \ldots, u_{m+n})     \bigg\}  \nonumber  \\
& + (-1)^{mn} \big[   P(u_1, \ldots, u_m) \cdot_A Q (u_{m+1}, \ldots, u_{m+n}) - (-1)^{mn} ~ Q (u_1, \ldots, u_n) \cdot_A P (u_{n+1}, \ldots, u_{m+n}) \big].  \nonumber 
\end{align}
This graded Lie bracket can be extended to $\oplus_{n \geq 0} \mathrm{Hom} (B^{\otimes n}, A)$ by the following definitions:
\begin{align*}
&\llbracket P, a \rrbracket (u_1, \ldots, u_m) = \sum_{i=1}^m ~ P (u_1, \ldots, u_{i-1}, a \cdot u_i - u_i \cdot a, u_{i+1}, \ldots, u_m) \\
& \qquad \qquad \qquad \qquad \qquad + P (u_1, \ldots, u_m) \cdot_A a - a \cdot_A P (u_1, \ldots, u_m), \\
&\llbracket a, b \rrbracket = a \cdot_A b - b \cdot_A a, ~ \text{ for } P \in \mathrm{Hom} (B^{\otimes m}, A) \text{ and } a, b \in A. 
\end{align*}

\medskip

\medskip

\noindent {\bf Observation II.}

For any $\lambda \in {\bf k}$, the associative multiplication $- \lambda \mu_B$ can be considered as an element in $\mathrm{Hom}( V^{\otimes 2}, V)$. This is infact a Maurer-Cartan element in the graded Lie algebra $\big( \oplus_{n \geq 0} \mathrm{Hom} (V^{\otimes n+1}, V), [~,~]_\mathsf{G} \big)$. Therefore, $- \lambda \mu_B$ induces a differential $d_{ - \lambda \mu_B} := - [ \lambda \mu_B, -]_\mathsf{G}$ on $\oplus_{n \geq 1} \mathrm{Hom}(V^{\otimes n}, V)$. Moreover, the graded subspace $\oplus_{n \geq 1} \mathrm{Hom}(B^{\otimes n}, A)$ is closed under the differential $d_{- \lambda \mu_B}.$ We denote the restriction of the differential $d_{- \lambda \mu_B}$ to the subspace $\oplus_{n \geq 1} \mathrm{Hom}(B^{\otimes n}, A)$ by $d$, and it is given by
\begin{align}\label{dgla-d}
(df) (u_1, \ldots, u_{n+1}) = (-1)^{n-1} \sum_{i=1}^n (-1)^{i-1} ~f (u_1, \ldots, u_{i-1}, \lambda u_i \cdot_B u_{i+1}, u_{i+2}, \ldots, u_{n+1}).
\end{align}
The differential $d$ can be extended to $\oplus_{n \geq 0} \mathrm{Hom} (B^{\otimes n}, A)$ by $(da) (u) = T(u) \cdot_A a - a \cdot_A T(u)$, for $a \in A$ and $u \in B$.

\medskip

\medskip

\noindent {\bf Observation III.}

Finally, it is easy to see that the elements $\mu_A + l + r$ and $\lambda \mu_B$ satisfies the following compatibility
\begin{align*}
[\mu_A + l + r, \lambda \mu_B ]_\mathsf{G} = 0.
\end{align*}
Therefore, we have
\begin{align*}
&d \llbracket P, Q \rrbracket \\
&= (-1)^m ~ d~ [~[ \mu_A + l + r, P]_\mathsf{G}, Q ]_\mathsf{G} \\
&=  (-1)^{m-1} ~[ \lambda \mu_B, [~[ \mu_A + l + r, P]_\mathsf{G}, Q ]_\mathsf{G} ]_\mathsf{G} \\
&= (-1)^{m-1} ~ [~[ \lambda \mu_B, [\mu_A + l + r, P]_\mathsf{G} ]_\mathsf{G}, Q ]_\mathsf{G} + (-1)^{m-1} (-1)^m ~ [~[\mu_A + l + r, P]_\mathsf{G}, [\lambda \mu_B, Q ]_\mathsf{G} ]_\mathsf{G} \\
&= (-1)^{m-1} (-1)^1 ~[~[\mu_A + l + r, [\lambda \mu_B, P ]_\mathsf{G} ]_\mathsf{G}, Q]_\mathsf{G} + [~[\mu_A + l + r, P]_\mathsf{G}, dQ ]_\mathsf{G} \\
&= - (-1)^{m} ~[~[ \mu_A + l + r, dP]_\mathsf{G}, Q ]_\mathsf{G} + [~[\mu_A + l + r, P]_\mathsf{G}, dQ ]_\mathsf{G} \\
&= \llbracket dP, Q \rrbracket + (-1)^m ~ \llbracket P, dQ \rrbracket.
\end{align*}
This shows that $d$ is a graded derivation for the derived bracket $\llbracket ~, ~ \rrbracket$ on the graded space $\oplus_{n \geq 0} \mathrm{Hom}(B^{\otimes n }, A)$.

\medskip

\medskip

As a summary of the above three observations, we get that $\big( \oplus_{n \geq 0} \mathrm{Hom}(B^{\otimes n}, A), \llbracket ~, ~ \rrbracket, d  \big)$ is a differential graded Lie algebra. The importance of this differential graded Lie algebra is given by the following.

\begin{thm}
A linear map $T : B \rightarrow A$ is a $\lambda$-weighted relative Rota-Baxter operator if and only if $T$ is a Maurer-Cartan element in the differential graded Lie algebra $\big( \oplus_{n \geq 0} \mathrm{Hom}(B^{\otimes n}, A), \llbracket ~, ~ \rrbracket, d  \big)$.
\end{thm}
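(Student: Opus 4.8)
The plan is to read off the Maurer-Cartan equation for the degree-one element $T$ and match it term by term with the defining identity (\ref{rel-rb-id}). Since $T \in \mathrm{Hom}(B^{\otimes 1}, A)$ sits in degree $1$ of the differential graded Lie algebra $\big( \oplus_{n \geq 0} \mathrm{Hom}(B^{\otimes n}, A), \llbracket ~, ~ \rrbracket, d \big)$, asserting that $T$ is a Maurer-Cartan element is exactly asserting the equality
\[
d T + \tfrac{1}{2} \llbracket T, T \rrbracket = 0
\]
of elements of $\mathrm{Hom}(B^{\otimes 2}, A)$. Thus the whole statement reduces to evaluating the left-hand side on an arbitrary pair $(u_1, u_2)$ and recognizing (\ref{rel-rb-id}).

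First I would compute the bracket contribution by specializing formula (\ref{dgla-b}) to $m = n = 1$ and $P = Q = T$. Each sum over $i$ then has a single summand, and after evaluating the sign prefactors $(-1)^{(i-1)n}$, $(-1)^{in}$ and $(-1)^{mn}$ at $m = n = 1$ the formula collapses to
\[
\tfrac{1}{2} \llbracket T, T \rrbracket (u_1, u_2) = T\big( T(u_1) \cdot u_2 \big) + T\big( u_1 \cdot T(u_2) \big) - T(u_1) \cdot_A T(u_2).
\]
Next I would evaluate the differential from formula (\ref{dgla-d}) with $f = T$ and $n = 1$; the single surviving term is $(dT)(u_1, u_2) = \lambda\, T(u_1 \cdot_B u_2)$.

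Adding the two contributions yields
\[
\big( dT + \tfrac{1}{2} \llbracket T, T \rrbracket \big)(u_1, u_2) = T\big( T(u_1)\cdot u_2 + u_1 \cdot T(u_2) + \lambda\, u_1 \cdot_B u_2 \big) - T(u_1) \cdot_A T(u_2),
\]
and this element of $\mathrm{Hom}(B^{\otimes 2}, A)$ vanishes for all $u_1, u_2$ precisely when $T$ satisfies (\ref{rel-rb-id}). Since the computation is a chain of equalities, it proves both implications at once. I expect the only delicate point to be the sign bookkeeping in (\ref{dgla-b}): even though every sum degenerates to a single term at $m = n = 1$, one must check that the two copies of $T(T(u_1)\cdot u_2)$ and of $T(u_1\cdot T(u_2))$ (coming from the $P$-first and $Q$-first halves of the derived bracket, which coincide when $P = Q = T$) add with matching signs, so that the prefactor $\tfrac{1}{2}$ exactly removes this doubling and leaves the correct weighted Rota-Baxter expression.
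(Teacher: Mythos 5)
Your proposal is correct and follows exactly the paper's own argument: specialize the formulas (\ref{dgla-b}) and (\ref{dgla-d}) to $m=n=1$, $P=Q=T$, observe that $\bigl(dT + \tfrac{1}{2}\llbracket T, T\rrbracket\bigr)(u,v) = T\bigl(T(u)\cdot v + u\cdot T(v) + \lambda\, u\cdot_B v\bigr) - T(u)\cdot_A T(v)$, and note this vanishes identically precisely when (\ref{rel-rb-id}) holds. Your sign bookkeeping is also right: the two halves of the derived bracket each contribute $T(T(u)\cdot v)+T(u\cdot T(v))$ and the product terms contribute $-2\,T(u)\cdot_A T(v)$, so the factor $\tfrac{1}{2}$ removes exactly this doubling.
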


\begin{proof}
For a linear map $T: B \rightarrow A$, we have from (\ref{dgla-b}) and (\ref{dgla-d}) that
\begin{align*}
(d T + \frac{1}{2} \llbracket T, T \rrbracket ) (u, v) = T ( \lambda ~ u \cdot_B v ) + T (T(u) \cdot v + u \cdot T(v)) - T(u) \cdot_A T(v).
\end{align*}
This shows that $T$ satisfies $d T + \frac{1}{2} \llbracket T, T \rrbracket  = 0$ if and only if $T$ is a $\lambda$-weighted relative Rota-Baxter operator. Hence the proof.
\end{proof}

\medskip

Let $T : B \rightarrow A$ be a $\lambda$-weighted relative Rota-Baxter operator. Then $T$ induces a degree $1$ differential $d_T = d + \llbracket T, - \rrbracket$ on the graded space $\oplus_{n \geq 0} \mathrm{Hom}(B^{\otimes n}, A)$. The differential $d_T$ is explicitly given by
\begin{align}\label{t-diff}
&(d_T f) (u_1, \ldots, u_{n+1}) \\
&= T ( f (u_1, \ldots, u_n) \cdot u_{n+1} ) - (-1)^n ~ T (u_1 \cdot f (u_2, \ldots, u_{n+1})) \nonumber \\
&- (-1)^n \sum_{i=1}^n (-1)^{i-1} ~ f (u_1, \ldots, u_{i-1}, T(u_i) \cdot u_{i+1} + u_i \cdot T(u_{i+1}) + \lambda~ u_i \cdot_B u_{i+1}, \ldots, u_{n+1} ) \nonumber \\
&+ (-1)^n ~ T (u_1) \cdot_A f (u_2, \ldots, u_{n+1}) - f (u_1, \ldots, u_n) \cdot_A T (u_{n+1}), \nonumber
\end{align} 
for $f \in \mathrm{Hom}(B^{\otimes n}, A)$ and $u_1, \ldots, u_{n+1} \in B$. We define
\begin{align*}
C^n_T (B,A) := \mathrm{Hom}(B^{\otimes n}, A), ~ \text{ for } n \geq 0.
\end{align*}
Then $\{ C^\ast_T (B,A), d_T \}$ is a cochain complex. If $Z^n_T (B,A) = \{ f \in C^n_T (B,A) |~ d_T f = 0 \}$ is the space of $n$-cocycles and $B^n_T (B,A) = \{ d_T f |~ f \in C^{n-1}_T (B,A) \}$ is the space of $n$-coboundaries, then we have $B^n_T (B, A) \subset Z^n_T (B,A)$, for $n \geq 0$. The corresponding quotients
\begin{align*}
H^n_T (B,A) := \frac{Z^n_T (B,A)  }{ B^n_T (B,A) }, ~\text{ for } n \geq 0
\end{align*}
are called the cohomology groups of $T$.

\medskip

\medskip

Note that the differential $d_T$ makes the triple $\big( \oplus_{n \geq 0} \mathrm{Hom}(B^{\otimes n}, A), \llbracket ~, ~ \rrbracket, d_T  \big)$ into a new differential graded Lie algebra. This new structure governs Maurer-Cartan deformations of $T$ as described by the following.

\begin{thm}
Let $T: B \rightarrow A$ be a $\lambda$-weighted relative Rota-Baxter operator. For any linear map $T' : B \rightarrow A$, the sum $T + T'$ is a $\lambda$-weighted relative Rota-Baxter operator if and only if $T'$ is a Maurer-Cartan element in the differential graded Lie algebra  $\big( \oplus_{n \geq 0} \mathrm{Hom}(B^{\otimes n}, A), \llbracket ~, ~ \rrbracket, d_T  \big)$.
\end{thm}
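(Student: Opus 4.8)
The plan is to reduce the statement to a formal property of Maurer--Cartan elements in a differential graded Lie algebra, rather than verifying the $\lambda$-weighted relative Rota--Baxter identity for $T+T'$ by hand. Recall that the preceding theorem establishes $d T + \frac{1}{2} \llbracket T, T \rrbracket = 0$, i.e.\ $T$ is a Maurer--Cartan element in $\big( \oplus_{n \geq 0} \mathrm{Hom}(B^{\otimes n}, A), \llbracket ~, ~ \rrbracket, d \big)$, and that the same theorem characterizes any linear map $S : B \rightarrow A$ as a $\lambda$-weighted relative Rota--Baxter operator precisely when $d S + \frac{1}{2} \llbracket S, S \rrbracket = 0$. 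So I would apply this characterization to $S = T + T'$.

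First I would expand the Maurer--Cartan expression for $T + T'$ using the bilinearity of $d$ and the graded symmetry of the bracket $\llbracket ~, ~ \rrbracket$ on degree-$1$ elements. Concretely,
\begin{align*}
d(T + T') + \tfrac{1}{2} \llbracket T + T', T + T' \rrbracket
= \Big( dT + \tfrac{1}{2} \llbracket T, T \rrbracket \Big) + \Big( dT' + \llbracket T, T' \rrbracket \Big) + \tfrac{1}{2} \llbracket T', T' \rrbracket.
\end{align*}
Here I use that $\llbracket T, T' \rrbracket = \llbracket T', T \rrbracket$ since $T$ and $T'$ both lie in degree $1$ of the shifted grading, so the two cross terms combine into $\llbracket T, T' \rrbracket$. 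The first bracketed summand vanishes because $T$ is itself a Maurer--Cartan element by the previous theorem.

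Next I would invoke the definition of the twisted differential $d_T = d + \llbracket T, - \rrbracket$ introduced just above the statement. Applying $d_T$ to $T'$ gives exactly $d T' + \llbracket T, T' \rrbracket$, so the displayed expansion collapses to
\begin{align*}
d(T + T') + \tfrac{1}{2} \llbracket T + T', T + T' \rrbracket = d_T T' + \tfrac{1}{2} \llbracket T', T' \rrbracket.
\end{align*}
Therefore $T + T'$ satisfies the Maurer--Cartan equation in $(d, \llbracket~,~\rrbracket)$ if and only if $d_T T' + \frac{1}{2} \llbracket T', T' \rrbracket = 0$, which is precisely the Maurer--Cartan equation in the twisted differential graded Lie algebra $\big( \oplus_{n \geq 0} \mathrm{Hom}(B^{\otimes n}, A), \llbracket ~, ~ \rrbracket, d_T \big)$. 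Combining this with the characterization from the earlier theorem (that a linear map is a $\lambda$-weighted relative Rota--Baxter operator iff it is Maurer--Cartan in the untwisted dgLa) yields both implications at once.

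I do not expect a serious obstacle here, since the argument is the standard ``twisting by a Maurer--Cartan element'' computation; the only point requiring care is bookkeeping the signs and the graded symmetry of $\llbracket ~, ~ \rrbracket$ on degree-$1$ arguments, to be sure the cross terms genuinely coincide and that $d_T$ is the correct twisted differential. One should also confirm that $d_T$ as defined squares to zero and is a derivation of $\llbracket ~, ~ \rrbracket$, so that the twisted triple really is a differential graded Lie algebra; but this follows from Observation III together with $\frac{1}{2}\llbracket T, T\rrbracket = -dT$, and so it may be taken as already established. The essential content is the purely formal equivalence between deforming the Maurer--Cartan element and solving the Maurer--Cartan equation in the twisted structure.
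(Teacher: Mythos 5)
Your proposal is correct and follows essentially the same route as the paper: expand $d(T+T') + \tfrac{1}{2}\llbracket T+T', T+T'\rrbracket$, use the symmetry of the bracket on degree-one elements and the Maurer--Cartan equation for $T$ to cancel $dT + \tfrac{1}{2}\llbracket T,T\rrbracket$, and recognize the remainder as $d_T(T') + \tfrac{1}{2}\llbracket T',T'\rrbracket$, then conclude via the earlier Maurer--Cartan characterization. The only difference is presentational: you spell out the graded symmetry justifying the combination of cross terms and the appeal to the preceding theorem, which the paper leaves implicit.
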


\begin{proof}
We observe that
\begin{align*}
d (T + T') + \frac{1}{2} \llbracket T + T', T + T' \rrbracket 
&= dT + dT' + \frac{1}{2} \big( \llbracket T, T \rrbracket + 2 \llbracket T, T' \rrbracket + \llbracket T', T' \rrbracket \big) \\
&= dT' + \llbracket T, T' \rrbracket + \frac{1}{2} \llbracket T', T' \rrbracket \\
&= d_T (T') + \frac{1}{2} \llbracket T', T' \rrbracket.
\end{align*}
Hence the result follows.
\end{proof}

\subsection{A new interpretation of the cohomology}\label{subsec-hoch}

In this subsection, we show that the cohomology of a $\lambda$-weighted relative Rota-Baxter operator $T$ defined above can be expressed as the Hochschild cohomology of a suitable associative algebra with coefficients in an appropriate bimodule. We start with the following result.

\begin{prop}
Let $T: B \rightarrow A$ be a $\lambda$-weighted relative Rota-Baxter operator. Then $A$ is a bimodule over the induced associative algebra $(B, \cdot_T)$ with left and right actions given by
\begin{align*}
l^T_u ( a) := T(u) \cdot_A a - T( u \cdot a) ~~~~ \text{ and } ~~~~ r^T_u (a) := a \cdot_A T(u) - T(a \cdot u), ~ \text{ for } u \in B, a \in A. 
\end{align*}
\end{prop}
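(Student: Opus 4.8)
The plan is to verify directly the three defining axioms of an associative bimodule structure for $A$ over the induced algebra $(B, \cdot_T)$: the left-module axiom $l^T_{u \cdot_T v}(a) = l^T_u(l^T_v(a))$, the right-module axiom $r^T_{u \cdot_T v}(a) = r^T_v(r^T_u(a))$, and the left-right compatibility $l^T_u(r^T_v(a)) = r^T_v(l^T_u(a))$, for all $u, v \in B$ and $a \in A$. The engine behind the whole computation is a single simplification coming from the relative Rota-Baxter identity (\ref{rel-rb-id}): since $u \cdot_T v = T(u) \cdot v + u \cdot T(v) + \lambda~ u \cdot_B v$, applying $T$ and using (\ref{rel-rb-id}) gives $T(u \cdot_T v) = T(u) \cdot_A T(v)$. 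This lets me replace every occurrence of $T(u \cdot_T v)$ by an honest product in $A$, and it is also applied repeatedly to auxiliary products of the form $T(\,\cdot\,) \cdot_A T(\,\cdot\,)$ that appear during the expansion.

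First I would treat the left-module axiom. Expanding the left-hand side and using the simplification above yields $l^T_{u \cdot_T v}(a) = (T(u) \cdot_A T(v)) \cdot_A a - T((u \cdot_T v) \cdot a)$. I then distribute the right $A$-action of $a$ over the three summands of $u \cdot_T v$ and invoke the $A$-bimodule axioms on $B$ — for instance $(T(u) \cdot v) \cdot a = T(u) \cdot (v \cdot a)$, $(u \cdot T(v)) \cdot a = u \cdot (T(v) \cdot_A a)$, and the associative-bimodule relation $(u \cdot_B v) \cdot a = u \cdot_B (v \cdot a)$. Applying (\ref{rel-rb-id}) once more to the product $T(u) \cdot_A T(v \cdot a)$ rewrites $T((u \cdot_T v) \cdot a)$ as $T(u) \cdot_A T(v \cdot a) - T(u \cdot T(v \cdot a)) + T(u \cdot (T(v) \cdot_A a))$. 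Expanding the right-hand side $l^T_u(l^T_v(a)) = T(u) \cdot_A (T(v) \cdot_A a) - T(u) \cdot_A T(v \cdot a) - T(u \cdot (T(v) \cdot_A a)) + T(u \cdot T(v \cdot a))$, the two sides match term by term, the leading terms agreeing by associativity of $A$, namely $(T(u) \cdot_A T(v)) \cdot_A a = T(u) \cdot_A (T(v) \cdot_A a)$.

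The right-module and compatibility axioms are handled by exactly the same method, reversing the roles of the left and right actions and applying (\ref{rel-rb-id}) to the correspondingly placed products $T(u \cdot a) \cdot_A T(v)$ and $T(u) \cdot_A T(a \cdot v)$. The main obstacle is not conceptual but organizational: each side of each axiom produces several terms of the form $T(\text{action expression})$ alongside genuine products in $A$, and the identity closes only once every such term is paired off. The delicate points are ensuring that the $\lambda$-weighted contributions — those involving $\cdot_B$ — cancel correctly, which is precisely where the associative-bimodule compatibilities such as $(u \cdot_B v) \cdot a = u \cdot_B (v \cdot a)$ and $(a \cdot u) \cdot_B v = a \cdot (u \cdot_B v)$ must be used, and keeping a disciplined account of the numerous applications of the bimodule axioms so that no term is left unmatched.
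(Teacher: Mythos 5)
Your proposal is correct and follows essentially the same route as the paper: direct verification of the three bimodule axioms for $(A, l^T, r^T)$ over $(B,\cdot_T)$, expanding via the bimodule compatibilities and repeatedly applying the identity (\ref{rel-rb-id}) (in particular $T(u\cdot_T v)=T(u)\cdot_A T(v)$ and its variants such as $T(u)\cdot_A T(v\cdot a)$, $T(u\cdot a)\cdot_A T(v)$, $T(a\cdot u)\cdot_A T(v)$) until all terms cancel. Your worked-out left-module case matches the paper's computation term by term, and the sketched right-module and compatibility cases close in exactly the same way.
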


\begin{proof}
For any $u, v \in B$ and $a \in A$, we have
\begin{align*}
&l^T_{u \cdot_T v} (a) - l^T_u l^T_v (a) \\
&= T (u \cdot_T v ) \cdot_A a - T ((u \cdot_T v) \cdot a) - l^T_u ( T(v) \cdot_A a - T (v \cdot a)) \\
&= (T(u) \cdot_A T(v) ) \cdot_A a - T \big(  (T(u) \cdot v + u \cdot T(v) + \lambda~ u \cdot_B v) \cdot a  \big) \\
& \qquad \qquad \qquad - T(u) \cdot_A (T(v) \cdot_A a - T (v \cdot a)) + T \big( u \cdot (T(v) \cdot_A a - T (v \cdot a))  \big) \\
&= - T \big( T(u) \cdot ( v \cdot a) + u \cdot (T(v) \cdot_A a) + \lambda ~ u \cdot_B (v \cdot a) \big) \\
& \qquad \qquad \qquad + T(u) \cdot_A T (v \cdot a) + T (u \cdot (T(v) \cdot_A a)) - T (u \cdot T (v \cdot a)) \\
&= - T \big( T(u) \cdot (v \cdot a) + \lambda ~u  \cdot_B (v \cdot a)  \big) + T \big(   T(u) \cdot (v \cdot a) + u \cdot T(v \cdot a) + \lambda ~ u \cdot_B (v \cdot a) \big)  - T (u \cdot T (v \cdot a) ) \\
&= 0. 
\end{align*}
Similarly, 
\begin{align*}
&r_v^T l_u^T (a) - l_u^T r_v^T (a) \\
&= r_v^T (T(u) \cdot_A a - T(u \cdot a)) - l_u^T (a \cdot_A T(v) - T(a \cdot v)) \\
&= (T(u) \cdot_A a - T(u \cdot a)) \cdot_A T(v)  - T \big(  (T(u) \cdot_A a - T(u \cdot a)) \cdot v \big) \\
& \qquad \qquad \qquad - T(u) \cdot_A (a \cdot_A T(v) - T (a \cdot v)) + T \big( u \cdot (a \cdot_A T(v) - T( a \cdot v) )  \big) \\
&= - T( u \cdot a) \cdot_A T(v) - T (T(u) \cdot (a \cdot v) - T(u \cdot a) \cdot v) + T (u) \cdot_A T (a \cdot v) + T \big( u \cdot (a \cdot_A T(v)) - u \cdot T (a \cdot v)  \big) \\
&= - T \big(  T(u \cdot a) \cdot v + (u \cdot a) \cdot T(v) + \lambda ~ (u \cdot a) \cdot_B v  \big) - T (T(u) \cdot ( a \cdot v)   - T (u \cdot a) \cdot v) \\
& \qquad \qquad \qquad + T \big(  T(u) \cdot (a \cdot v) + u \cdot T ( a \cdot v) + \lambda~ u \cdot_B (a \cdot v) \big) + T((u \cdot a) \cdot T(v) - u \cdot T(a \cdot v)) \\
&= 0
\end{align*}
and 
\begin{align*}
&r_v^T r_u^T (a) - r^T_{u \cdot_T v} (a) \\
&= r^T_v (a \cdot_A T(u) - T(a \cdot u)) - a \cdot_A T (u \cdot_T v) + T (a \cdot (u \cdot_T v)) \\
&= (a \cdot_A T(u) - T (a \cdot u)) \cdot_A T(v) - T \big(  (a \cdot_A T(u) - T (a \cdot u)) \cdot v   \big) \\
& \qquad \qquad \qquad - a \cdot_A (T(u) \cdot_A T(v)) + T (a \cdot (T(u) \cdot v + u \cdot T(v) + \lambda~ u \cdot_B v) ) \\
&= -T (a \cdot u) \cdot_A T(v) - T (a \cdot (T(u) \cdot v) - T(a \cdot u) \cdot v) \\
& \qquad \qquad \qquad + T (a \cdot (T(u) \cdot v)) + T (a \cdot (u \cdot T(v))) + \lambda~ T(  a \cdot (u \cdot_B v))   \\
&= - T \big( T( a \cdot u) \cdot v + ( a \cdot u) \cdot T(v) + \lambda ~(a \cdot u) \cdot_B v   \big) + T (T(a \cdot u) \cdot v + T ((a \cdot u) \cdot T(v)) + \lambda ~ T (( a \cdot u) \cdot_B v )\\
&= 0.
\end{align*}
This proves the result.
\end{proof}

\medskip

Let $T : B \rightarrow A$ be a $\lambda$-weighted relative Rota-Baxter operator. Then it follows from the previous proposition that one may consider the Hochschild complex $\{ C^\ast_\mathsf{H} (B,A) , d_\mathsf{H} \}$ of the associative algebra $(B, \cdot_T)$ with coefficients in the bimodule $(A, l^T, r^T).$ More precisely, the $n$-th cochain group 
\begin{align*}
C^n_\mathsf{H} (B,A) := \mathrm{Hom}(B^{\otimes n}, A), ~ \text{ for } n \geq 0
\end{align*} 
and the coboundary map $d_\mathsf{H} : C^n_\mathsf{H} (B,A) \rightarrow C^{n+1}_\mathsf{H} (B,A)$ given by
\begin{align}\label{hoch-diff}
(d_\mathsf{H} f ) (u_1, \ldots, u_{n+1}) =~& T(u_1) \cdot_A f (u_2, \ldots, u_{n+1}) - T (u_1 \cdot f (u_2, \ldots, u_{n+1})) \\
+~& \sum_{i=1}^n (-1)^i ~ f (u_1, \ldots, u_{i-1}, T(u_i) \cdot u_{i+1} + u_i \cdot T (u_{i+1}) + \lambda ~ u_i \cdot_B u_{i+1}, \ldots, u_{n+1} ) \nonumber \\
+~&(-1)^{n+1} ~ f(u_1, \ldots, u_n) \cdot_A T(u_{n+1}) - (-1)^{n+1} ~ T (f(u_1, \ldots, u_n) \cdot u_{n+1} ), \nonumber
\end{align}
for $f \in C^n_\mathsf{H}(B,A)$ and $u_1, \ldots, u_{n+1} \in B$. We denote the corresponding cohomology groups by $H^\ast_\mathsf{H}(B,A)$.

It follows from the expressions of (\ref{t-diff}) and (\ref{hoch-diff}) that $d_T f = (-1)^n ~d_\mathsf{H} f$, for $f \in \mathrm{Hom}(B^{\otimes n}, A)$. In other words, the differentials $d_T$ and $d_\mathsf{H}$ are same up to a sign, which implies that
\begin{align*}
H^\ast_T (B,A) \cong H^\ast_\mathsf{H} (B,A).
\end{align*}

\subsection{Particular case: $\lambda$-weighted Rota-Baxter operators}

Let $A$ be an associative algebra. Then $A$ is itself an associative $A$-bimodule, called the adjoint $A$-bimodule. Moreover, we have seen in Remark \ref{rel-not} that a $\lambda$-weighted Rota-Baxter operator on $A$ is a particular case of $\lambda$-weighted relative Rota-Baxter operator. Therefore, one may adopt the previous results in this particular case.

We summarize the results of Subsection \ref{subsection31} in the following theorem.

\begin{thm}
Let $A$ be an associative algebra and $\lambda \in {\bf k}$ be a fixed scalar. Then
\begin{itemize}
\item[(i)] there is a differential graded Lie algebra $\big( \oplus_{n \geq 0} \mathrm{Hom}(A^{\otimes n}, A), \llbracket ~, ~ \rrbracket, d   \big)$ on the graded space of multilinear maps on $A$, where the bracket $\llbracket ~, ~ \rrbracket$ and the differential $d$ are given by the formulas (\ref{dgla-b}) and (\ref{dgla-d}), respectively.
\item[(ii)] A linear map $T: A \rightarrow A$ is a $\lambda$-weighted Rota-Baxter operator on $A$ if and only if $T \in \mathrm{Hom}(A, A)$ is a Maurer-Cartan element in the above differential graded Lie algebra.
\item[(iii)] Let $T$ be a $\lambda$-weighted Rota-Baxter operator on $A$. For any linear map $T' : A \rightarrow A$, the sum $T + T'$ is a $\lambda$-weighted Rota-Baxter operator on $A$ if and only if $T'$ is a Maurer-Cartan element in the differential graded Lie algebra $\big( \oplus_{n \geq 0} \mathrm{Hom}(A^{\otimes n}, A), \llbracket ~, ~ \rrbracket, d_T = d + \llbracket T, - \rrbracket   \big)$.
\end{itemize}
\end{thm}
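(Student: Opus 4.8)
The plan is to obtain all three assertions as immediate specializations of the general results established in Subsection \ref{subsection31}, applied to the case $B = A$ equipped with the \emph{adjoint} $A$-bimodule structure. First I would record that $A$ is indeed an associative $A$-bimodule over itself when the left and right actions are taken to be the algebra multiplication, i.e. $l_a(b) = a \cdot_A b$ and $r_a(b) = b \cdot_A a$, with $\cdot_B = \cdot_A$. The bimodule axioms $(a \cdot_A b) \cdot u = a \cdot (b \cdot u)$, $(a \cdot u) \cdot b = a \cdot (u \cdot b)$, $(u \cdot a) \cdot b = u \cdot (a \cdot_A b)$, together with the three further compatibilities relating the actions to $\cdot_B$, all collapse to the single associativity identity of $\cdot_A$. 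This verification is the only thing that needs checking, and it is immediate; it guarantees that the hypotheses of every construction in Subsection \ref{subsection31} are satisfied with $B = A$.

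Granting this, part (i) follows at once: Observations I, II and III, and the summary statement that $\big( \oplus_{n \geq 0} \mathrm{Hom}(B^{\otimes n}, A), \llbracket ~, ~ \rrbracket, d \big)$ is a differential graded Lie algebra, hold verbatim after substituting $B = A$, $\mu_B = \mu_A$, and $l, r$ the adjoint actions. The resulting bracket and differential are then exactly the maps prescribed by (\ref{dgla-b}) and (\ref{dgla-d}), now with both $\cdot$ and $\cdot_B$ read as $\cdot_A$, so that one recovers the claimed structure on $\oplus_{n \geq 0} \mathrm{Hom}(A^{\otimes n}, A)$.

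For part (ii), I would invoke the Maurer-Cartan characterization theorem of Subsection \ref{subsection31} with $B = A$. Under this specialization the relative Rota-Baxter identity (\ref{rel-rb-id}) becomes precisely the defining identity of a $\lambda$-weighted Rota-Baxter operator on $A$, so $T$ satisfies $dT + \frac{1}{2}\llbracket T, T \rrbracket = 0$ if and only if $T$ is such an operator. Part (iii) is likewise the specialization of the deformation theorem: the identity $d(T+T') + \frac{1}{2}\llbracket T+T', T+T' \rrbracket = d_T(T') + \frac{1}{2}\llbracket T', T' \rrbracket$ is insensitive to setting $B = A$, whence $T+T'$ is a $\lambda$-weighted Rota-Baxter operator on $A$ exactly when $T'$ is a Maurer-Cartan element of the twisted differential graded Lie algebra $\big( \oplus_{n \geq 0} \mathrm{Hom}(A^{\otimes n}, A), \llbracket ~, ~ \rrbracket, d_T \big)$.

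The only potential subtlety — and therefore the \emph{main obstacle}, though a very mild one — is confirming that the adjoint bimodule genuinely qualifies as an associative $A$-bimodule in the sense demanded by the earlier constructions. Once that is in place there is no residual work to do, since the three parts are then literal instances of three theorems already proved; in particular, no fresh computation of the bracket, the differential, or the Maurer-Cartan equation is required.
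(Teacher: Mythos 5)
Your proposal is correct and matches the paper's own treatment: the paper likewise presents this theorem as a summary of the results of Subsection \ref{subsection31} specialized to the adjoint $A$-bimodule (using the earlier remarks that $A$ is an associative $A$-bimodule over itself and that a $\lambda$-weighted Rota-Baxter operator is a particular relative one), offering no further proof. Your explicit verification that the bimodule axioms collapse to associativity of $\cdot_A$ is the only content needed, and the paper treats it as equally immediate.
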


It follows that a $\lambda$-weighted Rota-Baxter operator $T$ induces a cochain complex $\{ C^\ast_T (A,A), d_T \}$. The corresponding cohomology groups are called the cohomology of $T$.

\medskip

On the other hand, the $\lambda$-weighted Rota-Baxter operator $T$ induces a new associative structure on $A$ with the product given by
\begin{align*}
a \cdot_T b = T(a) \cdot_A b + a \cdot_A T(b) + \lambda ~ a \cdot_A b, ~ \text{ for } a, b \in A.
\end{align*}
The vector space $A$ can be given a bimodule structure over the associative algebra $(A, \cdot_T)$ with left and right actions
\begin{align*}
l_a^T (b) = T(a) \cdot_A b - T (a \cdot_A b) ~~~~ \text{ and } ~~~~ r_a^T (b) = b \cdot_A T(a) - T ( b \cdot_A a), ~ \text{ for } a, b \in A.
\end{align*}
Moreover, the cohomology of $T$ is isomorphic to the Hochschild cohomology of $(A, \cdot_T)$ with coefficients in the above bimodule $(A, l^T, r^T)$.

\section{Deformations of $\lambda$-weighted relative Rota-Baxter operators}\label{sec-4}
In this section, we study deformations of a $\lambda$-weighted relative Rota-Baxter operator $T$ from cohomological points of view. We introduce Nijenhuis elements associated to $T$ that generate trivial linear deformations of $T$. We also find a sufficient condition for the rigidity of $T$. Finally, given a finite order deformation of $T$, we construct a second cohomology class in the cohomology of $T$, called the obstruction class. The vanishing of the obstruction class ensures that the deformation is extensible. 

\subsection{Linear deformations and Nijenhuis elements}

Let $A$ and $B$ be two associative algebras and $B$ be an associative $A$-bimodule. Let $T : B \rightarrow A$ be a $\lambda$-weighted relative Rota-Baxter operator.

\begin{defn}
A linear map $T_1 : B \rightarrow A$ is said to generate a linear deformation of $T$ if the linear sum $T_t := T + t T_1$ is a $\lambda$-weighted relative Rota-Baxter operator for all values of $t$. In this case, we say that $T_t$ is a linear deformation of $T$.
\end{defn}

Note that $T_1$ generates a linear deformation of $T$ if and only if the followings are hold
\begin{align}
T(u) \cdot_A T_1 (v) + T_1(u) \cdot_A T(v) =~& T (T_1(u) \cdot v + u \cdot T_1 (v)) + T_1 (T(u) \cdot v + u \cdot T(v) + \lambda u \cdot_B v), \label{lin1}\\
T_1 (u) \cdot_A T_1 (v) =~& T_1 (T_1 (u) \cdot v + u \cdot T_1(v)), \label{lin2}
\end{align}
for all $u, v \in B$.  It follows from (\ref{lin1}) that $d_T (T_1) = 0$, i.e., $T_1$ is a $1$-cocycle in the cohomology complex of $T$. On the other hand, the identity (\ref{lin2}) implies that $T_1$ is a relative Rota-Baxter operator (of weight $0$).

\begin{defn}
Two linear deformations $T_t = T+ t T_1$ and $T_t' = T + t T_1'$ of a $\lambda$-weighted relative Rota-Baxter operator $T$ are said to be equivalent if there exists an element $a_0 \in A$ such that $(\phi = \mathrm{id}_A + t (l_{a_0}^{\mathrm{ad}} - r_{a_0}^{\mathrm{id}}) , ~ \psi = \mathrm{id}_B + t (l_{a_0} - r_{a_0}))$ is a morphism of $\lambda$-weighted relative Rota-Baxter operators from $T_t$ to $T_t'$.
\end{defn}

The condition in the above definition is equivalent to the followings (see \cite{das-rota} for similar observation)
\begin{align}
(a_0 \cdot_A a - a \cdot_A a_0) \cdot_A (a_0 \cdot_A b - b \cdot_A a_0) = 0 ~~~ \text{ and } ~~~ (a_0 \cdot u - u \cdot a_0) \cdot_B (a_0 \cdot v - v \cdot a_0) = 0, \label{equiv1}\\
\qquad \qquad \qquad \begin{cases}
T_1 (u ) - T_1' (u) = T (a_0 \cdot u - u \cdot a_0) - (a_0 \cdot_A T(u) - T(u) \cdot_A a_0 ), \\
a_0 \cdot_A T_1(u) - T_1 (u) \cdot_A a_0 = T_1' (a_0 \cdot u - u \cdot a_0),
\end{cases} \label{equiv2}\\
l_{(a_0 \cdot_A a - a \cdot_A a_0)} l_{a_0} = l_{(a_0 \cdot_A a - a \cdot_A a_0)} r_{a_0}   ~~~~ \text{ and } ~~~~ r_{(a_0 \cdot_A a - a \cdot_A a_0)} l_{a_0} = r_{(a_0 \cdot_A a - a \cdot_A a_0)} r_{a_0}, \label{equiv3}
\end{align}
for all $a, b \in A$ and $u, v \in B$. It follows from (\ref{equiv2}) that $(T_1 - T_1') (u) = d_T (a_0)(u)$, for $u \in B$. Thus, we have the following.

\begin{prop}
Let $T_t = T+ t T_1$ and $T_t' = T + tT_1'$ be two equivalent linear deformations of $T$. Then $T_1$ and $T_1'$ corresponds to the same cohomology class in $H^1_T (B,A).$
\end{prop}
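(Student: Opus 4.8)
The plan is to read off the claim directly from the equivalence conditions that have already been spelled out in equations (\ref{equiv1})--(\ref{equiv3}), and in particular from the first line of (\ref{equiv2}), which gives a pointwise identity relating $T_1$ and $T_1'$. The statement to prove is that if $T_t = T + tT_1$ and $T_t' = T + tT_1'$ are equivalent, then $T_1$ and $T_1'$ represent the same class in $H^1_T(B,A)$, i.e.\ their difference $T_1 - T_1'$ is a coboundary $d_T(a_0)$ for some $a_0 \in A$.

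First I would invoke the definition of equivalence: there exists $a_0 \in A$ such that the pair $(\phi = \mathrm{id}_A + t(l_{a_0}^{\mathrm{ad}} - r_{a_0}^{\mathrm{id}}),\ \psi = \mathrm{id}_B + t(l_{a_0} - r_{a_0}))$ is a morphism of $\lambda$-weighted relative Rota-Baxter operators from $T_t$ to $T_t'$. By the characterization recorded above, this equivalence unpacks into the conditions (\ref{equiv1})--(\ref{equiv3}), of which the relevant one is the first equation of (\ref{equiv2}):
\begin{align*}
T_1(u) - T_1'(u) = T(a_0 \cdot u - u \cdot a_0) - \big(a_0 \cdot_A T(u) - T(u) \cdot_A a_0\big),
\end{align*}
valid for all $u \in B$. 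The key step is then simply to recognize the right-hand side as $(d_T a_0)(u)$. Recall that the differential $d_T$ was extended to $\oplus_{n\ge 0}\mathrm{Hom}(B^{\otimes n},A)$ in degree $0$ by $(da)(u) = T(u)\cdot_A a - a\cdot_A T(u)$ together with the bracket term $\llbracket T, a_0\rrbracket$; evaluating $d_T(a_0) = d(a_0) + \llbracket T, a_0\rrbracket$ on $u$ and using the defining formula for $\llbracket P, a\rrbracket$ (with $P = T$) reproduces exactly the four terms on the right-hand side above.

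So the whole proof reduces to a bookkeeping verification that $(d_T a_0)(u)$, assembled from the degree-$0$ extension formulas for $d$ and for the derived bracket $\llbracket ~,~\rrbracket$ given in Observation I and Observation II, equals $T(a_0\cdot u - u\cdot a_0) - a_0\cdot_A T(u) + T(u)\cdot_A a_0$. Once this identification is made, equation (\ref{equiv2}) reads $(T_1 - T_1')(u) = (d_T a_0)(u)$, hence $T_1 - T_1' = d_T(a_0) \in B^1_T(B,A)$, and since both $T_1$ and $T_1'$ are $1$-cocycles (each generates a linear deformation, so $d_T(T_1) = d_T(T_1') = 0$ as noted after (\ref{lin2})), they define the same class in $H^1_T(B,A) = Z^1_T(B,A)/B^1_T(B,A)$.

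The only mild obstacle is the sign-bookkeeping in matching $(d_T a_0)(u)$ against the stated right-hand side: one must be careful that the contribution of $\llbracket T, a_0\rrbracket(u)$ (namely $T(a_0\cdot u - u\cdot a_0) + T(u)\cdot_A a_0 - a_0\cdot_A T(u)$ from the $m=1$ case of the bracket formula) combines correctly with the degree-$0$ piece, and that no extra terms from the derived-bracket definition survive. Since this is a purely formal substitution into formulas already established in the paper, I expect no real difficulty beyond care with the signs, and the proposition follows at once.
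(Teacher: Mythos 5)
Your proposal is correct and is essentially the paper's own proof: the paper likewise reads the proposition directly off the first equation of (\ref{equiv2}), identifying $T_1 - T_1'$ with the coboundary $d_T(a_0)$, with $T_1$ and $T_1'$ already known to be $1$-cocycles from the discussion following (\ref{lin2}). The bookkeeping point you flag resolves by noting that $d$ vanishes on $0$-cochains (the paper's stated degree-$0$ formula for $d$ in Observation II is a misprint—it involves $T$, which is not fixed there, and taken literally would duplicate the terms $T(u)\cdot_A a_0 - a_0\cdot_A T(u)$), so that $d_T(a_0) = \llbracket T, a_0 \rrbracket$, whose $m=1$ case matches the right-hand side of (\ref{equiv2}) exactly.
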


We now introduce Nijenhuis elements associated to $T$. This generalizes the similar notion defined for weight $0$ relative Rota-Baxter operators \cite{das-rota}.

\begin{defn}
An element $a_0 \in A$ is called a Nijenhuis element associated to a $\lambda$-weighted relative Rota-Baxter $T$ if $a_0$ satisfies (\ref{equiv1}), (\ref{equiv3}) and
\begin{align*}
a_0 \cdot_A (l_u^T (a_0) - r_u (a_0) ) - (l_u^T (a_0) - r_u^T (a_0) ) \cdot_A a_0 = 0, \text{ for all } u \in B. 
\end{align*}

We denote the set of all Nijenhuis elements by $\mathrm{Nij}(T)$. A linear deformation $T_t$ is said to be trivial if $T_t$ is equivalent to $T_t' = T$. It follows from the above discussion that a trivial linear deformation gives rise to a Nijenhuis element. It was shown in \cite{das-rota} that Nijenhuis elements also generate trivial linear deformations of relative Rota-Baxter operator (of weight $0$). Similarly, we can prove the same result for a weighted relative Rota-Baxter operator. We only state the result and refer to \cite{das-rota} for the proof.

\begin{prop}
Let $T: B \rightarrow A$ be a $\lambda$-weighted relative Rota-Baxter operator. For any $a_0 \in \mathrm{Nij}(T)$, the linear map $T_1 = d_T(a) : B \rightarrow A$ generates a trivial linear deformation of $T$.
\end{prop}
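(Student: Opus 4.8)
The plan is to establish the two assertions contained in the claim: that $T_t := T + t\,T_1$ with $T_1 = d_T(a_0)$ satisfies the deformation equations (\ref{lin1}) and (\ref{lin2}) for all $t$, and that $T_t$ is equivalent to the trivial deformation $T_t' = T$. First I would make $T_1$ explicit. Evaluating the formula (\ref{t-diff}) in degree zero gives
\[
T_1(u) = (d_T a_0)(u) = T(a_0 \cdot u) - T(u \cdot a_0) + T(u) \cdot_A a_0 - a_0 \cdot_A T(u) = l^T_u(a_0) - r^T_u(a_0),
\]
and this closed form is what every later step leans on.

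The cocycle condition (\ref{lin1}) is then immediate: since $T$ is a Maurer-Cartan element, $d_T$ is a differential, so $d_T(T_1) = d_T^2(a_0) = 0$, and (\ref{lin1}) is precisely the statement $d_T(T_1)=0$.

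For the rest I would route the argument through the pair $(\phi,\psi)$ from the definition of equivalence, namely $\phi = \mathrm{id}_A + t(l_{a_0}^{\mathrm{ad}} - r_{a_0}^{\mathrm{id}})$, $a \mapsto a + t(a_0 \cdot_A a - a \cdot_A a_0)$, and $\psi = \mathrm{id}_B + t(l_{a_0} - r_{a_0})$, $u \mapsto u + t(a_0 \cdot u - u \cdot a_0)$. The two facts to prove are that $\Phi := \phi \oplus \psi$ is an algebra endomorphism of $A \ltimes_\lambda B$ over $\mathbf{k}[t]$, and that $\phi \circ T_t = T \circ \psi$. Expanding $\Phi\big((a,u)\bullet_\lambda(b,v)\big) - \Phi(a,u)\bullet_\lambda \Phi(b,v)$ in powers of $t$, the coefficient of $t^1$ cancels by the bimodule compatibility axioms relating $\cdot_A,\cdot_B,l,r$, while the coefficient of $t^2$ cancels exactly by the Nijenhuis relations (\ref{equiv1}) and (\ref{equiv3}). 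For the intertwining relation, the $t^1$-coefficient of $\phi \circ T_t - T \circ \psi$ is $T_1 - d_T(a_0)$, which vanishes by the displayed formula, while the $t^2$-coefficient is $a_0 \cdot_A T_1(u) - T_1(u) \cdot_A a_0$, which vanishes by the third defining relation of a Nijenhuis element once $T_1(u)=l^T_u(a_0)-r^T_u(a_0)$ is substituted.

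Granting these, the conclusion follows smoothly. Working over $\mathbf{k}[[t]]$, where $\Phi$ is invertible, the relation $\phi \circ T_t = T \circ \psi$ yields $\Phi(\mathrm{Gr}(T_t)) = \mathrm{Gr}(T)$; since $\mathrm{Gr}(T)$ is a subalgebra of $A \ltimes_\lambda B$ and $\Phi$ is an algebra automorphism, $\mathrm{Gr}(T_t)$ is a subalgebra as well, so $T_t$ is a $\lambda$-weighted relative Rota-Baxter operator. Reading off the $t^1$- and $t^2$-coefficients recovers (\ref{lin1}) and (\ref{lin2}) as $t$-independent identities, hence valid for every scalar $t$; and $(\phi,\psi)$ is by construction a morphism from $T_t$ to $T_t' = T$, so the deformation is trivial. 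I expect the genuine obstacle to be the algebra-endomorphism property of $\Phi$ --- equivalently, the direct verification of (\ref{lin2}) --- since this is the one place where all three Nijenhuis conditions and the complete list of bimodule axioms must be used at once, and it is exactly here that the extra $\lambda\,u\cdot_B v$ term of $\bullet_\lambda$ makes the weighted case differ from the weight-$0$ computation of \cite{das-rota}. One may instead expand $T_1(u)\cdot_A T_1(v) - T_1\big(T_1(u)\cdot v + u \cdot T_1(v)\big)$ directly from $T_1 = l^T-r^T$, which is the weighted analogue of the calculation in \cite{das-rota}.
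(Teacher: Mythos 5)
Your proposal is correct, but it reaches the conclusion by a route that differs from the paper's at the decisive step. The paper offers no argument of its own: it states the result and defers to the weight-zero computation of \cite{das-rota}, where the Rota-Baxter property of $T_t=T+t\,d_T(a_0)$ --- that is, the quadratic identity (\ref{lin2}) --- is verified by expanding $T_1(u)\cdot_A T_1(v)-T_1\big(T_1(u)\cdot v+u\cdot T_1(v)\big)$ with $T_1(u)=l^T_u(a_0)-r^T_u(a_0)$ and cancelling terms against the Nijenhuis identities, after which the equivalence with $T_t'=T$ is read off from (\ref{equiv1})--(\ref{equiv3}). You instead obtain (\ref{lin1}) and (\ref{lin2}) simultaneously and without any expansion of (\ref{lin2}): having checked that $\Phi=\phi\oplus\psi$ is an algebra endomorphism of $A\ltimes_\lambda B$ (order $t$ by the bimodule axioms, order $t^2$ by (\ref{equiv1}) and (\ref{equiv3})) and that $\phi\circ T_t=T\circ\psi$ (order $t$ because $T_1=d_T(a_0)$, order $t^2$ by the third Nijenhuis condition), you pass to $\mathbf{k}[[t]]$, where $\Phi$ is an algebra automorphism, conclude that $\mathrm{Gr}(T_t)=\Phi^{-1}(\mathrm{Gr}(T))$ is a subalgebra, and invoke the paper's graph characterization (whose proof is valid verbatim over $\mathbf{k}[[t]]$) to get the Rota-Baxter identity for $T_t$, whose $t$- and $t^2$-coefficients are exactly (\ref{lin1}) and (\ref{lin2}). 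I checked the coefficient cancellations you describe; they hold as claimed, and there is no circularity since neither intermediate fact presupposes that $T_t$ is a Rota-Baxter operator. What your route buys is conceptual economy: the hard identity is never touched directly, and triviality comes packaged with the same data. What the paper's (referenced) route buys is that it stays elementary --- no extension of scalars to formal power series, no graph argument --- and is self-contained at the level of identities.

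One small point to patch. The paper's notion of morphism requires $\psi$ to be an algebra morphism of $(B,\cdot_B)$, but when $\lambda=0$ the product $\cdot_B$ does not appear in $\bullet_\lambda$ at all, so the endomorphism property of $\Phi$ on $A\ltimes_0 B$ says nothing about it; your two stated ``facts to prove'' are then not quite sufficient for the triviality claim. The fix is one more instance of the same expansion: the $t$-coefficient of $\psi(u)\cdot_B\psi(v)-\psi(u\cdot_B v)$ vanishes by the three compatibilities defining an associative $A$-bimodule, and the $t^2$-coefficient equals $(a_0\cdot u-u\cdot a_0)\cdot_B(a_0\cdot v-v\cdot a_0)$, which vanishes by the second identity in (\ref{equiv1}). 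With that line added, your argument is complete for every $\lambda$.
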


\end{defn}
\subsection{Formal deformations}
In this subsection, we study formal deformations of a $\lambda$-weighted relative Rota-Baxter operator $T$ by keeping the underlying algebras $A$ and $B$ intact.

Let $A$ and $B$ be two associative algebras and $B$ be an associative $A$-bimodule. Consider the space $A[[t]]$ of formal power series in $t$ with coefficients from $A$. The associative structure on $A$ induces an associative product on $A[[t]]$ by ${\bf k}[[t]]$-bilinearity. Similarly, $B[[t]]$ is an associative algebra over ${\bf k}[[t]]$, and $B[[t]]$ is an associative $A[[t]]$-bimodule.

\begin{defn}
Let $T: B \rightarrow A$ be a $\lambda$-weighted relative Rota-Baxter operator. A formal deformation of $T$ is given by a formal sum
\begin{align*}
T_t = T_0 + t T_1 + t^2 T_2 + \cdots  ~\in \mathrm{Hom}(B,A)[[t]] ~~ \text{ with } T_0 = T
\end{align*}
such that the ${\bf k}[[t]]$-linear map $T_t : B[[t]] \rightarrow A[[t]]$ is a $\lambda$-weighted relative Rota-Baxter operator.
\end{defn}

Note that $T_t$ is a formal deformation of $T$ if and only if
%\begin{align*}
%T_t (u) \cdot_{A[[t]]} T_t (v) = T_t \big( T_t (u) \cdot v + u \cdot T_t (v) + \lambda~ u \cdot_{B[[t]]} v   \big), ~\text{ for } u, v \in B.
%\end{align*}
%This is equivalent to say that
\begin{align}\label{def-eqn-sys}
\sum_{i+j = n} T_i (u) \cdot_A T_j (v) = \sum_{i+j = n} T_i \big(     T_j (u ) \cdot v + u \cdot T_j (v) \big) + \lambda ~ T_n (u \cdot_B v),~ \text{ for } n=0, 1, \ldots .
\end{align}
The identity (\ref{def-eqn-sys}) holds for $n=0$ as $T_0 = T$ is a $\lambda$-weighted relative Rota-Baxter operator. For $n =1$, we get
\begin{align*}
T(u) \cdot_A T_1 (v) + T_1 (u) \cdot_A T(v) = T ( T_1 (u) \cdot v + u \cdot T_1 (v)) ~+~ T_1 ( T (u) \cdot v + u \cdot T (v) +  \lambda ~ u \cdot_B v),
\end{align*}
for $u, v \in B$. This implies that $T_1$ is a $1$-cocycle in the cohomology complex of $T$. This is called the infinitesimal of the deformation $T_t$. More generally, if $T_1 = \cdots = T_{k-1} = 0$ and $T_k$ is the first nonvanishing term after $T_0 = T$ in the expression of $T_t$, then $T_k$ is a $1$-cocycle in the cohomology complex of $T$.

\begin{defn}
Two formal deformations $T_t$ and $T_t'$ of a $\lambda$-weighted relative Rota-Baxter operator $T$ are said to be equivalent if there exists an element $a_0 \in A$, linear maps $\phi_i \in \mathrm{Hom}(A,A)$ and $\psi_i \in \mathrm{Hom}(B, B)$ for $i \geq 2$, such that
\begin{align*}
\big( \phi_t= \mathrm{id}_A + t (l_{a_0}^{\mathrm{ad}} - r_{a_0}^{\mathrm{ad}}) + \sum_{i \geq 2} t^i \phi_i, ~ \psi_{t} = \mathrm{id}_B + t (l_{a_0} - r_{a_0}) + \sum_{i \geq 2} t^i \psi_i  \big)
\end{align*}
is a morphism of $\lambda$-weighted relative Rota-Baxter operators from $T_t$ to $T_t'$.
\end{defn}

The following result is similar to the case of linear deformations. Hence we omit the details.

\begin{prop}
Let $T_t$ and $T_t'$ be two equivalent formal deformations of a $\lambda$-weighted relative Rota-Baxter operator $T$. Then the corresponding infinitesimals are cohomologous, hence corresponds to the same cohomology class in $H^1_T (B,A).$
\end{prop}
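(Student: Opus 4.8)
The plan is to exploit the equivalence hypothesis, which supplies a morphism $(\phi_t, \psi_t)$ of $\lambda$-weighted relative Rota-Baxter operators over ${\bf k}[[t]]$ from $T_t$ to $T_t'$. The crucial piece of data is the intertwining relation $\phi_t \circ T_t = T_t' \circ \psi_t$ (the compatibility of $\psi_t$ with the two $A$-actions will not be needed). First I would substitute the explicit forms $\phi_t = \mathrm{id}_A + t(l^{\mathrm{ad}}_{a_0} - r^{\mathrm{ad}}_{a_0}) + \sum_{i \geq 2} t^i \phi_i$ and $\psi_t = \mathrm{id}_B + t(l_{a_0} - r_{a_0}) + \sum_{i \geq 2} t^i \psi_i$ into this relation and expand both sides as power series in $t$. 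Using $T_0 = T$, the coefficient of $t^0$ gives the trivial identity, and the coefficient of $t^1$ gives the sole relation I need.

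Collecting the order-$t$ terms yields
\begin{align*}
T_1 + (l^{\mathrm{ad}}_{a_0} - r^{\mathrm{ad}}_{a_0}) \circ T = T_1' + T \circ (l_{a_0} - r_{a_0}),
\end{align*}
which, after rearrangement and evaluation at $u \in B$, reads
\begin{align*}
(T_1 - T_1')(u) = T(a_0 \cdot u - u \cdot a_0) - \big( a_0 \cdot_A T(u) - T(u) \cdot_A a_0 \big).
\end{align*}
The second step is to recognize the right-hand side as a coboundary. Specializing the formula (\ref{t-diff}) for $d_T$ to the degree-zero cochain $a_0 \in A = \mathrm{Hom}(B^{\otimes 0}, A)$ produces exactly $(d_T a_0)(u) = T(a_0 \cdot u) - T(u \cdot a_0) + T(u) \cdot_A a_0 - a_0 \cdot_A T(u)$, which agrees termwise with the expression above. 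Hence $T_1 - T_1' = d_T(a_0) \in B^1_T(B,A)$, so the infinitesimals are cohomologous and determine the same class in $H^1_T(B,A)$.

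The argument is essentially bookkeeping, and I expect the only delicate point to be the correct sign matching when specializing $d_T$ to degree zero; once that formula is pinned down, the identification with the coboundary is immediate. I would emphasize that this parallels precisely the computation already performed in the linear deformation case, the sole new feature being the presence of the higher-order corrections $\phi_i, \psi_i$ for $i \geq 2$ in $\phi_t$ and $\psi_t$. Since these enter only at order $t^2$ and above, they do not contribute to the order-$t$ comparison, and therefore leave the infinitesimal relation $T_1 - T_1' = d_T(a_0)$ untouched — which is why the proof reduces to the linear case and the details may be safely omitted.
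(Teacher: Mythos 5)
Your proof is correct and follows essentially the same route as the paper: the paper omits the details by appeal to the linear case, where the first identity of (\ref{equiv2}) — which is exactly your order-$t$ coefficient of $\phi_t \circ T_t = T_t' \circ \psi_t$ — yields $(T_1 - T_1')(u) = d_T(a_0)(u)$. Your specialization of (\ref{t-diff}) at $n=0$ also matches the paper's use of $d_T$ on degree-zero cochains, so the identification $T_1 - T_1' = d_T(a_0)$ is exactly the intended argument.
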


\medskip

\begin{defn}
A $\lambda$-weighted relative Rota-Baxter operator $T$ is said to be rigid if any deformation $T_t$ is equivalent to the undeformed one $T_t' = T$.
\end{defn}

One may find the following sufficient condition for the rigidity of $T$ in terms of Nijenhuis elements. The proof is similar to \cite[Proposition 4.16]{das-rota}. Hence we will not repeat it here.

\begin{thm}
Let $T$ be a $\lambda$-weighted relative Rota-Baxter operator. If $Z^1_T (B,A) = d_T (\mathrm{Nij}(T))$ then $T$ is rigid.
\end{thm}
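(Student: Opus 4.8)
The plan is to show that the rigidity condition $Z^1_T(B,A) = d_T(\mathrm{Nij}(T))$ forces every formal deformation $T_t = T + tT_1 + t^2 T_2 + \cdots$ to be equivalent to the trivial one $T_t' = T$. The strategy is an inductive normalization: I would argue that one can successively remove the lowest-order nonvanishing term of $T_t$ by applying an equivalence, until nothing remains beyond $T_0 = T$.

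First I would recall that if $T_1 = \cdots = T_{k-1} = 0$ and $T_k$ is the first nonvanishing term after $T_0$, then (as noted just before the statement) $T_k$ is a $1$-cocycle, i.e.\ $T_k \in Z^1_T(B,A)$. By the hypothesis $Z^1_T(B,A) = d_T(\mathrm{Nij}(T))$, there exists a Nijenhuis element $a_0 \in \mathrm{Nij}(T)$ with $T_k = d_T(a_0)$. The key step is then to choose the equivalence whose linear term is built from this $a_0$, namely one induced by the pair $(\phi_t = \mathrm{id}_A + t^k(l_{a_0}^{\mathrm{ad}} - r_{a_0}^{\mathrm{ad}}) + \cdots,\ \psi_t = \mathrm{id}_B + t^k(l_{a_0} - r_{a_0}) + \cdots)$, and to verify that applying it to $T_t$ produces an equivalent deformation $\overline{T}_t = T + t^{k+1}\overline{T}_{k+1} + \cdots$ whose first $k$ deformation terms all vanish. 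The point of using a Nijenhuis element (rather than an arbitrary $a_0$ with $d_T(a_0) = T_k$) is that the Nijenhuis conditions (\ref{equiv1}) and (\ref{equiv3}), together with the displayed compatibility in the definition of $\mathrm{Nij}(T)$, are exactly what guarantee that this pair genuinely defines a morphism of $\lambda$-weighted relative Rota-Baxter operators, so that the resulting $\overline{T}_t$ is again a bona fide formal deformation equivalent to $T_t$.

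Iterating this process raises the order of the first nonvanishing term by at least one at each stage, and the equivalences compose in the $t$-adic topology; passing to the limit yields an equivalence from the original $T_t$ to $T_t' = T$, establishing rigidity. Since this is structurally identical to the weight $0$ case, I would model the argument on \cite[Proposition 4.16]{das-rota}, adapting each formula to carry the extra $\lambda\, u \cdot_B v$ terms.

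The main obstacle is the inductive step: verifying that the $a_0$-induced pair $(\phi_t, \psi_t)$ is compatible with the bracket structure, so that the normalized $\overline{T}_t$ is still a relative Rota-Baxter operator and the lowest term is killed cleanly. This requires the full strength of the Nijenhuis conditions, and the bookkeeping of the cross terms (especially those involving $\cdot_B$ and the bimodule actions $l, r$) is where the weighted case differs from and is more delicate than the weight $0$ case; the higher maps $\phi_i, \psi_i$ for $i \geq 2$ must be chosen compatibly so that no obstruction to forming the morphism arises at intermediate orders.
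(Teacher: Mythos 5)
Your proposal is correct and is essentially the paper's own approach: the paper gives no argument for this theorem, deferring instead to \cite[Proposition 4.16]{das-rota}, and your inductive normalization (write the first nonvanishing term as $T_k = d_T(a_0)$ with $a_0 \in \mathrm{Nij}(T)$, remove it by the equivalence built from $a_0$ at order $t^k$, iterate, and pass to the $t$-adic limit) is exactly that argument transplanted to weight $\lambda$. One small simplification: you need not choose higher maps $\phi_i, \psi_i$ ``compatibly'' at each step, since (\ref{equiv1}) and (\ref{equiv3}) alone already make the pair $\big(\phi_t = \mathrm{id}_A + t^k (l_{a_0}^{\mathrm{ad}} - r_{a_0}^{\mathrm{ad}}),\ \psi_t = \mathrm{id}_B + t^k (l_{a_0} - r_{a_0})\big)$ a pair of algebra morphisms compatible with the actions, so that $\phi_t \circ T_t \circ \psi_t^{-1}$ is automatically an equivalent deformation killing the $t^k$-term; nontrivial higher coefficients arise only in the final composed equivalence.
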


\subsection{Finite order deformations}
Let $A$ and $B$ be two associative algebras and $B$ be an associative $A$-bimodule. For any $N \geq 1$, consider the space $A[[t]]/(t^{N+1})$ which inherits an associative algebra structure over the base ring ${\bf k}[[t]]/(t^{N+1})$. Similarly, the space $B[[t]]/(t^{N+1})$  is an associative algebra over  ${\bf k}[[t]]/(t^{N+1})$, and an associative $A[[t]]/(t^{N+1})$-bimodule.

\begin{defn}
Let $T: B \rightarrow A$ be a $\lambda$-weighted relative Rota-Baxter operator. An order $N$ deformation of $T$ consists of a polynomial of the form $T_t = \sum_{i=0}^N t^i T_i$ with $T_0 = T$, such that the  ${\bf k}[[t]]/(t^{N+1})$-linear map $T_t :  B[[t]]/(t^{N+1}) \rightarrow A[[t]]/(t^{N+1})$ is a $\lambda$-weighted relative Rota-Baxter operator.
\end{defn}

Note that $T_t = \sum_{i=0}^N t^i T_i$ is an order $N$ deformation of $T$ if and only if the identities (\ref{def-eqn-sys}) are hold for $n =0, 1, \ldots, N$. They can be equivalently expressed as
\begin{align}\label{fin-eq}
d_T (T_n ) = - \frac{1}{2} \sum_{i+j = n; i, j \geq 1}\llbracket T_i, T_j \rrbracket,~ \text{ for } n =0, 1, \ldots, N.
\end{align}

\begin{defn}
An order $N$ deformation $T_t = \sum_{i=0}^N t^i T_i$ of a $\lambda$-weighted relative Rota-Baxter operator $T$ is said to extensible if there exists a linear map $T_{N+1} : B \rightarrow A$ such that
\begin{align*}
\widetilde{T_t} = T_t + t^{N+1} T_{N+1} = \sum_{i=0}^{N+1} t^i T_i
\end{align*}
is a deformation of order $N+1$.
\end{defn}

Thus, in the case of extension, one more equation need to be satisfied, namely,
\begin{align}\label{exten-n}
d_T (T_{N+1}) = - \frac{1}{2} \sum_{i+j = N+1; i, j \geq 1} \llbracket T_i, T_j \rrbracket.
\end{align}
Observe that the right-hand side of the above equation does not contain the term $T_{N+1}$. Hence it depends only on the given order $N$ deformation $T_t$. Moreover, it is a $2$-cochain in the cohomology complex of $T$. We denote this $2$-cochain by $Ob_{T_t}.$

\begin{prop}
The cochain $Ob_{T_t}$ is a $2$-cocycle in the cohomology complex of $T$.
\end{prop}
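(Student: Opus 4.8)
The plan is to package the finite deformation as a Maurer-Cartan element and read off the obstruction from a Bianchi-type identity in the differential graded Lie algebra $\big(\oplus_{n\geq 0}\mathrm{Hom}(B^{\otimes n},A),\llbracket~,~\rrbracket,d_T\big)$ from Subsection \ref{subsection31}. Write $\theta_t=\sum_{i=1}^N t^i T_i$, regarded as a degree $1$ element of $L[[t]]$, where $L=\oplus_{n\geq 0}\mathrm{Hom}(B^{\otimes n},A)$ is equipped with the $t$-linearly extended bracket and the differential $d_T$. By the Maurer-Cartan characterization of deformations, $T_t=T+\theta_t$ is an order $N$ deformation of $T$ if and only if
\begin{align*}
F(\theta_t):=d_T\theta_t+\tfrac{1}{2}\llbracket\theta_t,\theta_t\rrbracket\equiv 0\pmod{t^{N+1}}.
\end{align*}
I would first compute the coefficient of $t^{N+1}$ in $F(\theta_t)$. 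Since $\theta_t$ has no $t^{N+1}$-term, $d_T\theta_t$ contributes nothing in that degree, while the coefficient of $t^{N+1}$ in $\tfrac{1}{2}\llbracket\theta_t,\theta_t\rrbracket$ equals $\tfrac{1}{2}\sum_{i+j=N+1;\,i,j\geq 1}\llbracket T_i,T_j\rrbracket=-Ob_{T_t}$. Combined with the fact that the lower-order coefficients vanish, this gives $F(\theta_t)=-Ob_{T_t}\,t^{N+1}+O(t^{N+2})$.

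The key step is the Bianchi-type identity $d_T F(\theta_t)+\llbracket\theta_t,F(\theta_t)\rrbracket=0$, which holds in any differential graded Lie algebra for a degree $1$ element. I would deduce it from the three structural facts already available: $d_T^2=0$, the graded derivation property of $d_T$ for $\llbracket~,~\rrbracket$, and the graded Jacobi/antisymmetry of the bracket. Concretely, $d_T F(\theta_t)=d_T^2\theta_t+\llbracket d_T\theta_t,\theta_t\rrbracket=\llbracket d_T\theta_t,\theta_t\rrbracket$, whereas $\llbracket\theta_t,F(\theta_t)\rrbracket=\llbracket\theta_t,d_T\theta_t\rrbracket+\tfrac{1}{2}\llbracket\theta_t,\llbracket\theta_t,\theta_t\rrbracket\rrbracket$; the last summand vanishes by the graded Jacobi identity for an odd element, and the two remaining terms cancel by graded antisymmetry.

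Finally I would extract the coefficient of $t^{N+1}$ from this identity. Because $\theta_t$ is divisible by $t$ and $F(\theta_t)$ is divisible by $t^{N+1}$, the bracket $\llbracket\theta_t,F(\theta_t)\rrbracket$ is divisible by $t^{N+2}$ and so contributes nothing in degree $t^{N+1}$. The $t^{N+1}$-coefficient of $d_T F(\theta_t)$ is $-d_T(Ob_{T_t})$, and equating it to zero yields $d_T(Ob_{T_t})=0$, that is, $Ob_{T_t}$ is a $2$-cocycle in the cohomology complex of $T$.

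The main obstacle I anticipate is the sign bookkeeping: verifying the graded conventions under which $d_T^2=0$, the derivation rule, and the vanishing $\llbracket\theta_t,\llbracket\theta_t,\theta_t\rrbracket\rrbracket=0$ hold with the degrees used here (a linear map $B\to A$ sitting in degree $1$). As an alternative that avoids the formal-variable packaging, one can expand $d_T(Ob_{T_t})$ directly, apply the derivation property, and substitute the finite-order relations (\ref{fin-eq}) for each $d_T T_i$; this reduces $d_T(Ob_{T_t})$ to a multiple of $\sum_{p+q+j=N+1}\llbracket\llbracket T_p,T_q\rrbracket,T_j\rrbracket$, which vanishes after grouping the ordered index-triples into cyclic orbits and invoking the graded Jacobi identity on each orbit (the diagonal orbit $p=q=j$ vanishing because the characteristic is zero). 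I would present the Maurer-Cartan/Bianchi argument, as it is the cleaner of the two.
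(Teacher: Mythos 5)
Your proof is correct, but it takes a genuinely different route from the paper's. The paper proves the proposition by a direct computation: it expands $d_T(Ob_{T_t}) = d(Ob_{T_t}) + \llbracket T, Ob_{T_t}\rrbracket$, applies the derivation property of $d$ (Observation III) and the graded Jacobi identity to reduce this to $-\tfrac{1}{2}\sum_{i+j=N+1}\big(\llbracket d_T T_i, T_j\rrbracket - \llbracket T_i, d_T T_j\rrbracket\big)$, substitutes the order-$N$ deformation equations (\ref{fin-eq}) for each $d_T T_i$, and concludes from the vanishing of $\sum_{i+j+k=N+1}\llbracket\llbracket T_i,T_j\rrbracket,T_k\rrbracket$ --- which is exactly the ``alternative'' you sketch in your final paragraph and then set aside. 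Your main argument repackages the same ingredients ($d_T^2=0$, the derivation rule, graded antisymmetry and Jacobi for odd elements) into the curvature $F(\theta_t)=d_T\theta_t+\tfrac{1}{2}\llbracket\theta_t,\theta_t\rrbracket$ and the Bianchi identity $d_TF(\theta_t)+\llbracket\theta_t,F(\theta_t)\rrbracket=0$, then extracts the $t^{N+1}$-coefficient; the signs do check out with the paper's grading (a map $B^{\otimes n}\to A$ sits in degree $n$, so each $T_i$ is odd, $\llbracket\theta_t,d_T\theta_t\rrbracket=-\llbracket d_T\theta_t,\theta_t\rrbracket$, and $\llbracket\theta_t,\llbracket\theta_t,\theta_t\rrbracket\rrbracket=0$ in characteristic zero). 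What your route buys is conceptual transparency: the obstruction appears as the leading coefficient of the curvature, cocycleness is an instance of a standard identity valid in any differential graded Lie algebra, and no index bookkeeping is needed. What the paper's route buys is self-containedness: it works purely with the finitely many maps $T_1,\ldots,T_N$ and never needs to extend the differential graded Lie algebra ${\bf k}[[t]]$-linearly to formal power series --- a routine step, but one your argument uses tacitly and should state.
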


\begin{proof}
We observe that
\begin{align*}
&d_T ( - \frac{1}{2} \sum_{i+j = N+1, i, j \geq 1} \llbracket T_i, T_j \rrbracket ) \\
&=  - \frac{1}{2} \sum_{i+j = N+1, i, j \geq 1}   ( d \llbracket T_i, T_j \rrbracket + \llbracket T, \llbracket T_i, T_j \rrbracket \rrbracket ) \\
&= - \frac{1}{2} \sum_{i+j = N+1, i, j \geq 1}    \big( \llbracket d T_i, T_j \rrbracket -  \llbracket T_i, dT_j \rrbracket  + \llbracket \llbracket T, T_i \rrbracket, T_j \rrbracket  - \llbracket T_i, \llbracket T, T_j \rrbracket \rrbracket \big) \\
&=  - \frac{1}{2} \sum_{i+j = N+1, i, j \geq 1}    \big( \llbracket d_T T_i, T_j \rrbracket -  \llbracket T_i, d_T T_j \rrbracket \big)  \\
&= \frac{1}{4} \sum_{i_1 +  i_2 + j = N+1, i_1, i_2, j \geq 1} \llbracket  \llbracket T_{i_1}, T_{i_2} \rrbracket, T_j \rrbracket - \frac{1}{4} \sum_{i + j_1 + j_2 = N+1, i, j_1, j_2 \geq 1} \llbracket T_i, \llbracket T_{j_1}, T_{j_2} \rrbracket \rrbracket  ~~~ (\text{from } (\ref{fin-eq}))\\
&= \frac{1}{2} \sum_{i + j + k = N+1, i, j, k \geq 1} \llbracket  \llbracket T_{i}, T_{j} \rrbracket, T_k \rrbracket = 0.
\end{align*}
This completes the proof.
\end{proof}

Thus, given an order $N$ deformation $T_t$, we obtain a second cohomology class $[Ob_{T_t} ] \in H^2_T (B,A)$ in the cohomology of $T$. This is called the obstruction class to extend the deformation $T_t$. Moreover, from the identity (\ref{exten-n}), we get the following.

\begin{thm}
An order $N$ deformation $T_t$ of a $\lambda$-weighted relative Rota-Baxter operator $T$ is extensible if and only if the corresponding obstruction class $[Ob_{T_t} ] \in H^2_T (B,A)$ is trivial.
\end{thm}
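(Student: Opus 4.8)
The plan is to recognize that, by the very definition of extensibility together with the identity (\ref{exten-n}), the problem reduces to the solvability of a single linear equation, and then to interpret that equation as the assertion that $Ob_{T_t}$ is a coboundary. Recall that extending $T_t$ to an order $N+1$ deformation $\widetilde{T_t} = T_t + t^{N+1} T_{N+1}$ requires exactly that the linear map $T_{N+1} : B \rightarrow A$ satisfy
\begin{align*}
d_T (T_{N+1}) = - \frac{1}{2} \sum_{i+j = N+1; i, j \geq 1} \llbracket T_i, T_j \rrbracket = Ob_{T_t}.
\end{align*}
The preceding proposition tells us that $Ob_{T_t} \in Z^2_T (B,A)$, so the class $[Ob_{T_t}] \in H^2_T (B,A)$ is well-defined and the statement makes sense.

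First I would treat the forward implication. If $T_t$ is extensible, then by definition there exists $T_{N+1}$ satisfying the displayed equation, whence $Ob_{T_t} = d_T (T_{N+1})$ belongs to the space $B^2_T (B,A)$ of $2$-coboundaries. Since $Ob_{T_t}$ is simultaneously a $2$-cocycle, this forces $[Ob_{T_t}] = 0$ in $H^2_T (B,A)$. For the converse, I would assume $[Ob_{T_t}] = 0$; as $Ob_{T_t}$ is a cocycle, its triviality in cohomology means $Ob_{T_t} = d_T (T_{N+1})$ for some $T_{N+1} \in C^1_T (B,A) = \mathrm{Hom}(B,A)$. Setting $\widetilde{T_t} = T_t + t^{N+1} T_{N+1}$, I would then check that $\widetilde{T_t}$ is an order $N+1$ deformation by verifying that the system (\ref{fin-eq}) holds for $n = 0, 1, \ldots, N+1$. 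For each $n \leq N$ the $n$-th equation involves only $T_0, \ldots, T_n$ (the summation indices obey $i, j \geq 1$ with $i + j = n$, so $i, j \leq n-1 \leq N-1$, and the differential $d_T = d + \llbracket T, - \rrbracket$ brings in only $T = T_0$), so these equations are untouched by the new term $T_{N+1}$ and continue to hold because $T_t$ was already an order $N$ deformation. The single remaining equation, at $n = N+1$, is precisely $d_T (T_{N+1}) = Ob_{T_t}$, which holds by the choice of $T_{N+1}$.

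I expect no genuine obstacle in this argument: the substantive work has been front-loaded into the cocycle computation of the preceding proposition, which is exactly what lets the obstruction be read off as a well-defined cohomology class. The only point requiring a moment of care is confirming that the right-hand sides of the lower-order equations truly do not involve $T_{N+1}$, so that extending the deformation genuinely reduces to solving the single top-order equation $d_T (T_{N+1}) = Ob_{T_t}$.
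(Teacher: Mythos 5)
Your proof is correct and follows exactly the route the paper intends: the paper derives the theorem directly from the identity (\ref{exten-n}) together with the preceding proposition that $Ob_{T_t}$ is a $2$-cocycle, which is precisely your argument with the details (coboundary in the forward direction, choice of $T_{N+1}$ and the check that the lower-order equations are unaffected in the converse) written out. Nothing is missing; you have simply made explicit what the paper leaves as an immediate consequence.
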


\begin{corollary}
\begin{itemize}
\item[(i)] If $H^2_T (B,A) = 0$ then any finite order deformation of $T$ is extensible.

\item[(ii)] If $H^2_T (B,A) = 0$ then every $1$-cocycle in the cohomology complex of $T$ is the infinitesimal of some formal deformation of $T$.
\end{itemize}
\end{corollary}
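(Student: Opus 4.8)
The plan is to derive both statements as direct consequences of the extensibility criterion established just above, which asserts that an order $N$ deformation $T_t$ is extensible precisely when its obstruction class $[Ob_{T_t}] \in H^2_T(B,A)$ vanishes.

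For part (i) the argument is immediate. Given any finite order (say order $N$) deformation $T_t$ of $T$, the Proposition preceding the extensibility theorem guarantees that $Ob_{T_t}$ is a $2$-cocycle, hence defines a class $[Ob_{T_t}] \in H^2_T(B,A)$. Since $H^2_T(B,A) = 0$ by hypothesis, this class is necessarily trivial, and the extensibility theorem then yields a linear map $T_{N+1}$ satisfying (\ref{exten-n}), so that $T_t$ extends to an order $N+1$ deformation.

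For part (ii) I would construct the desired formal deformation by induction on the order. Starting from a $1$-cocycle $T_1$, the base case is to observe that $T + t T_1$ is an order $1$ deformation: the deformation equation (\ref{fin-eq}) holds trivially for $n=0$, since $T_0 = T$ is already a $\lambda$-weighted relative Rota-Baxter operator, and it reduces to $d_T(T_1) = 0$ for $n=1$ (the sum on the right is empty), which is exactly the cocycle condition. For the inductive step, suppose $T_0, T_1, \ldots, T_N$ have been chosen so that $T_t = \sum_{i=0}^N t^i T_i$ is an order $N$ deformation. By part (i), the vanishing of $H^2_T(B,A)$ forces $[Ob_{T_t}] = 0$, so there exists $T_{N+1}$ making $\sum_{i=0}^{N+1} t^i T_i$ an order $N+1$ deformation. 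Iterating produces an infinite sequence $T_1, T_2, T_3, \ldots$ whose formal sum $T_t = T + \sum_{i \geq 1} t^i T_i$ satisfies (\ref{def-eqn-sys}) in every degree, hence is a formal deformation of $T$. By construction its coefficient of $t$ is the prescribed $T_1$, so $T_1$ is the infinitesimal of $T_t$.

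Both statements are corollaries of the extensibility theorem, so there is little genuine difficulty; the only point requiring care is the bookkeeping in the induction for part (ii). One must ensure that the $T_{N+1}$ produced at stage $N$ is compatible with all lower-order constraints, but this is automatic: the right-hand side of (\ref{exten-n}) depends only on $T_1, \ldots, T_N$ and not on $T_{N+1}$, so solving the equation at stage $N$ does not disturb the equations already satisfied at lower orders. Thus the inductively defined coefficients assemble unambiguously into a single formal power series, completing the proof.
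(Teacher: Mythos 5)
Your proposal is correct and follows exactly the route the paper intends: the corollary is stated without proof precisely because part (i) is immediate from the extensibility theorem, and part (ii) is the standard induction you describe, extending the order-one deformation $T + tT_1$ (whose deformation equation at order one is exactly the cocycle condition $d_T(T_1)=0$) one order at a time using the vanishing of $H^2_T(B,A)$. Your added remark that solving (\ref{exten-n}) at each stage leaves the lower-order equations untouched is the right bookkeeping point and completes the argument.
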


\begin{remark}
One may also study various aspects of deformations of $\lambda$-weighted Rota-Baxter operators on an algebra $A$. Since we will get similar results as above, we do not repeat them here.
\end{remark}

\section{The Lie case}\label{sec-5}
In this section, we define the cohomology of $\lambda$-weighted relative Rota-Baxter operators in the context of Lie algebras. We find its relation with the cohomology of $\lambda$-weighted relative Rota-Baxter operators on associative algebras.

\subsection{Weighted relative Rota-Baxter operators and their Cohomology}
Let $(\mathfrak{g}, [~,~]_\mathfrak{g})$ and $(\mathfrak{h}, [~,~]_\mathfrak{h})$ be two Lie algebras. Suppose the Lie algebra $\mathfrak{g}$ acts on $\mathfrak{h}$ by a Lie algebra homomorphism $\rho : \mathfrak{g} \rightarrow \mathrm{Der}(\mathfrak{h})$. In this case, we also say that $\mathfrak{h}$ is a Lie $\mathfrak{g}$-module.

\begin{defn}
\begin{itemize}
\item[(i)] Let $\mathfrak{g}$ and $\mathfrak{h}$ be two Lie algebras, and let $\mathfrak{g}$ acts on $\mathfrak{h}$ by a Lie algebra homomorphism $\rho : \mathfrak{g} \rightarrow \mathrm{Der}(\mathfrak{h})$. A linear map $T : \mathfrak{h} \rightarrow \mathfrak{g}$ is said to be a $\lambda$-weighted relative Rota-Baxter operator on $\mathfrak{h}$ over the Lie algebra $\mathfrak{g}$ if
\begin{align*}
[T(u), T(v)]_\mathfrak{g} = T \big(   \rho (Tu) v - \rho (Tv) u + \lambda [u, v]_\mathfrak{h} \big), ~ \text{ for } u, v \in \mathfrak{h}.
\end{align*}
For simplicity, we say that $T$ is a  $\lambda$-weighted relative Rota-Baxter operator.
\item[(ii)] Let $T, T' : \mathfrak{h} \rightarrow \mathfrak{g}$ be two $\lambda$-weighted relative Rota-Baxter operators. A morphism from $T$ to $T'$ is a pair $(\phi, \psi)$ consist of Lie algebra homomorphisms $\phi : \mathfrak{g} \rightarrow \mathfrak{g}$ and $\psi : \mathfrak{h} \rightarrow \mathfrak{h}$ satisfying additionally
\begin{align*}
\phi \circ T = T' \circ \psi ~~~~ \text{ and } \psi (\rho (x) u) = \rho (\phi (x)) \psi (u), ~ \text{ for } x \in \mathfrak{g}, u \in \mathfrak{h}.
\end{align*}
\end{itemize}
The set of all $\lambda$-weighted relative Rota-Baxter operators and morphisms between them forms a category. We denote this category by $\mathsf{rRB}_\lambda (\mathfrak{h}, \mathfrak{g}).$
\end{defn}

\medskip

In the following, we introduce $\lambda$-weighted modified Yang-Baxter equation in a Lie algebra and find a connection with $\lambda$-weighted Rota-Baxter operators.

\begin{defn}
Let $\mathfrak{g}$ be a Lie algebra. For a linear map $R : \mathfrak{g} \rightarrow \mathfrak{g}$, the equation
\begin{align*}
[R(x), R(y)]_\mathfrak{g} = R \big( [R(x), y]_\mathfrak{g} + [x, R(y)]_\mathfrak{g} \big) - \lambda^2 [x,y]_\mathfrak{g},~ \text{ for } x, y \in \mathfrak{g}
\end{align*}
is called the $\lambda$-weighted modified Yang-Baxter equation (modified YBE$_\lambda$).
\end{defn}

The following result is similar to Proposition \ref{rb-myb}. Hence we will omit the details.

\begin{prop}
Let $\mathfrak{g}$ be a Lie algebra. A linear map $T : \mathfrak{g} \rightarrow \mathfrak{g}$ is a $\lambda$-weighted Rota-Baxter operator if and only if $R = \lambda \mathrm{id}_\mathfrak{g} + 2T$ is a solution of modified YBE$_\lambda$.
\end{prop}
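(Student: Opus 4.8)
The plan is to mimic the strategy of Proposition \ref{rb-myb}, which is the associative analogue of exactly this statement. Given a $\lambda$-weighted Rota-Baxter operator $T$ on the Lie algebra $\mathfrak{g}$, I set $R = \lambda \, \mathrm{id}_\mathfrak{g} + 2T$ and verify directly that $R$ solves modified YBE$_\lambda$; conversely, given a solution $R$, I recover $T = \tfrac{1}{2}(R - \lambda\, \mathrm{id}_\mathfrak{g})$ and check it is a $\lambda$-weighted Rota-Baxter operator. The two directions are algebraically equivalent, so it suffices to compute both sides of the modified YBE$_\lambda$ in terms of $T$ and compare.

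First I would expand the left-hand side. Using bilinearity of the bracket,
\begin{align*}
[R(x), R(y)]_\mathfrak{g} = \lambda^2 [x,y]_\mathfrak{g} + 2\lambda \big( [T(x), y]_\mathfrak{g} + [x, T(y)]_\mathfrak{g} \big) + 4\, [T(x), T(y)]_\mathfrak{g}.
\end{align*}
Next I would expand the right-hand side $R\big( [R(x), y]_\mathfrak{g} + [x, R(y)]_\mathfrak{g} \big) - \lambda^2 [x,y]_\mathfrak{g}$. Substituting $R = \lambda\,\mathrm{id}_\mathfrak{g} + 2T$ into the inner argument gives $[R(x),y]_\mathfrak{g} + [x,R(y)]_\mathfrak{g} = 2\lambda [x,y]_\mathfrak{g} + 2\big([T(x),y]_\mathfrak{g} + [x,T(y)]_\mathfrak{g}\big)$, and then applying $R = \lambda\,\mathrm{id}_\mathfrak{g} + 2T$ once more (the $\lambda\,\mathrm{id}_\mathfrak{g}$ part contributing a scalar multiple and the $2T$ part a term $T(\cdots)$) yields
\begin{align*}
\lambda^2 [x,y]_\mathfrak{g} + 2\lambda\big([T(x),y]_\mathfrak{g} + [x,T(y)]_\mathfrak{g}\big) + 4\, T\big( \lambda [x,y]_\mathfrak{g} + [T(x),y]_\mathfrak{g} + [x,T(y)]_\mathfrak{g} \big).
\end{align*}

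Comparing the two expansions, the $\lambda^2$ and $2\lambda$ terms cancel identically, and the equality of the remaining terms reduces precisely to
\begin{align*}
[T(x), T(y)]_\mathfrak{g} = T\big( [T(x),y]_\mathfrak{g} + [x,T(y)]_\mathfrak{g} + \lambda [x,y]_\mathfrak{g} \big),
\end{align*}
which is the defining identity of a $\lambda$-weighted Rota-Baxter operator on $\mathfrak{g}$ (the skew-symmetry of the bracket lets me rewrite $\rho(Tx)y - \rho(Ty)x = [T(x),y]_\mathfrak{g} + [x,T(y)]_\mathfrak{g}$ in the adjoint-module case). Since every step is reversible, this establishes both implications simultaneously. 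I anticipate no genuine obstacle here: the computation is entirely mechanical bilinear bookkeeping, the only point requiring minor care being the bookkeeping of which factor of $2$ and $\lambda$ attaches to each term, and the recognition that the adjoint action $\rho = \mathrm{ad}$ turns the general relative identity into the bracket form above. This is why the paper, like Proposition \ref{rb-myb}, omits the details.
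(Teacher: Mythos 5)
Your proof is correct and follows exactly the route the paper intends: the paper omits the details precisely because the argument is the same substitution-and-expansion computation as in the associative case (Proposition 2.7), which you reproduce faithfully for the Lie bracket, including the correct reduction of $\rho(Tx)y - \rho(Ty)x$ to $[T(x),y]_\mathfrak{g} + [x,T(y)]_\mathfrak{g}$ via skew-symmetry. The only stylistic difference is that you note all steps are reversible to get both implications at once, whereas the paper's associative-case model treats the converse separately; this is immaterial.
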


Let $\mathfrak{g}$ acts on $\mathfrak{h}$ by a Lie algebra homomorphism $\rho : \mathfrak{g} \rightarrow \mathrm{Der}(\mathfrak{h})$. Then for any $\lambda \in {\bf k}$, the direct sum $\mathfrak{g} \oplus \mathfrak{h}$ carries a Lie bracket given by
\begin{align*}
[(x,u), (u, v)]_{\ltimes_\lambda} := \big(  [x,y]_\mathfrak{g},~ \rho (x) u - \rho(y) v + \lambda [u, v]_\mathfrak{h} \big), \text{ for } x, y \in \mathfrak{g} \text{ and } u, v \in \mathfrak{h}. 
\end{align*}
This is called the $\lambda$-weighted semidirect product. With this notation, we have the following characterization of $\lambda$-weighted relative Rota-Baxter operators.

\begin{prop}
A linear map $T : \mathfrak{h} \rightarrow \mathfrak{g}$ is a $\lambda$-weighted relative Rota-Baxter operator if and only if the graph $Gr (T) = \{ (T(u), u) |~ u \in \mathfrak{h} \}$ is a subalgebra of the $\lambda$-weighted semidirect product Lie algebra $(\mathfrak{g} \oplus \mathfrak{h}, [~,~]_{\ltimes_\lambda})$.
\end{prop}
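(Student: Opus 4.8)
The plan is to mirror the proof of the analogous graph characterization in the associative setting given earlier, since the $\lambda$-weighted semidirect product Lie bracket $[~,~]_{\ltimes_\lambda}$ plays exactly the same structural role here as $\bullet_\lambda$ did there. The whole argument reduces to a single bracket computation followed by a comparison with the defining identity of a $\lambda$-weighted relative Rota-Baxter operator, and both implications fall out simultaneously.

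First I would fix $u, v \in \mathfrak{h}$ and expand the bracket of the two graph elements $(T(u), u)$ and $(T(v), v)$ directly from the definition of $[~,~]_{\ltimes_\lambda}$, pushing the action $\rho$ through. This yields
\begin{align*}
\big[ (T(u), u), (T(v), v) \big]_{\ltimes_\lambda} = \Big( [T(u), T(v)]_\mathfrak{g},~ \rho(T(u)) v - \rho(T(v)) u + \lambda [u, v]_\mathfrak{h} \Big).
\end{align*}
Writing $w := \rho(T(u)) v - \rho(T(v)) u + \lambda [u, v]_\mathfrak{h} \in \mathfrak{h}$ for the second component, I observe that such a pair lies in $Gr(T)$ if and only if its first entry equals $T$ applied to its second entry, that is, $[T(u), T(v)]_\mathfrak{g} = T(w)$. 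This last equation is precisely the defining equation of a $\lambda$-weighted relative Rota-Baxter operator evaluated at $u, v$.

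Since $(\mathfrak{g} \oplus \mathfrak{h}, [~,~]_{\ltimes_\lambda})$ is already a Lie algebra and $Gr(T)$ is visibly a linear subspace, being a subalgebra is equivalent to closure under the bracket alone; antisymmetry and the Jacobi identity are inherited automatically. Hence $Gr(T)$ is closed for all $u, v$ exactly when $[T(u), T(v)]_\mathfrak{g} = T(w)$ holds for all $u, v$, which is exactly the Rota-Baxter condition, giving the claimed equivalence. There is no genuine obstacle in this proof; the only point requiring care is the sign bookkeeping when expanding the semidirect bracket, so that the two action terms $\rho(T(u)) v$ and $\rho(T(v)) u$ appear with the correct signs and match the terms in the operator identity. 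Once this is verified, the equivalence is immediate.
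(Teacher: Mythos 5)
Your proof is correct and follows exactly the argument the paper uses (the paper proves the associative analogue by the same one-line computation and leaves this Lie version implicit): expand $[(T(u),u),(T(v),v)]_{\ltimes_\lambda}$, note the result lies in $Gr(T)$ iff its first component equals $T$ of its second, and observe this is precisely the defining identity of a $\lambda$-weighted relative Rota-Baxter operator. Your additional remark that closure under the bracket suffices because $Gr(T)$ is a linear subspace is a correct and worthwhile clarification.
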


As a consequence, we get the following.

\begin{prop}
Let $T: \mathfrak{h} \rightarrow \mathfrak{g}$ be a $\lambda$-weighted relative Rota-Baxter operator. Then $\mathfrak{h}$ carries a new Lie algebra structure with bracket
\begin{align*}
[u, v]_T := \rho (Tu) v - \rho (Tv) u + \lambda [u,v]_\mathfrak{h}, ~ \text{ for } u, v \in \mathfrak{h}.
\end{align*}
\end{prop}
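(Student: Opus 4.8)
The plan is to deduce the result directly from the preceding graph characterization, rather than verifying the Lie algebra axioms for $[~,~]_T$ by hand. The previous proposition tells us that $Gr(T) = \{(T(u), u) \mid u \in \mathfrak{h}\}$ is a subalgebra of the $\lambda$-weighted semidirect product Lie algebra $(\mathfrak{g} \oplus \mathfrak{h}, [~,~]_{\ltimes_\lambda})$; in particular, $Gr(T)$ is itself a Lie algebra under the restriction of $[~,~]_{\ltimes_\lambda}$. Since the map $\iota : \mathfrak{h} \rightarrow Gr(T)$, $u \mapsto (T(u), u)$, is a linear isomorphism, it suffices to show that $\iota$ transports the bracket $[~,~]_T$ onto this restricted bracket; the Lie algebra structure on $(\mathfrak{h}, [~,~]_T)$ will then be inherited from that on $Gr(T)$.

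First I would compute the bracket of two elements of the graph inside the semidirect product. By definition of $[~,~]_{\ltimes_\lambda}$,
\begin{align*}
[(T(u), u), (T(v), v)]_{\ltimes_\lambda} = \big( [T(u), T(v)]_\mathfrak{g}, ~ \rho(Tu)v - \rho(Tv)u + \lambda [u,v]_\mathfrak{h} \big).
\end{align*}
The second component is, by definition, precisely $[u,v]_T$. The key input is the $\lambda$-weighted relative Rota-Baxter identity for $T$, which rewrites the first component as
\begin{align*}
[T(u), T(v)]_\mathfrak{g} = T\big( \rho(Tu)v - \rho(Tv)u + \lambda [u,v]_\mathfrak{h} \big) = T([u,v]_T).
\end{align*}
Hence the bracket equals $(T([u,v]_T), [u,v]_T) = \iota([u,v]_T)$, which exhibits $\iota$ as an isomorphism of Lie algebras from $(\mathfrak{h}, [~,~]_T)$ onto the subalgebra $Gr(T)$. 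In particular, bilinearity, antisymmetry and the Jacobi identity for $[~,~]_T$ follow at once from the corresponding properties on $Gr(T)$, and the proof is complete.

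I expect no serious obstacle here: once the relative Rota-Baxter identity is used to show that the first coordinate of the semidirect-product bracket lands in the image $T([u,v]_T)$, everything reduces to transporting a known Lie algebra structure along a linear isomorphism. The only point requiring care is checking that $Gr(T)$ is closed under $[~,~]_{\ltimes_\lambda}$, but this is exactly what the previous proposition guarantees. An alternative, self-contained route would be to verify antisymmetry and the Jacobi identity for $[~,~]_T$ by hand, using that $\rho$ is a Lie algebra homomorphism into $\mathrm{Der}(\mathfrak{h})$; in that approach the Jacobi identity would be the main computational burden, and the graph argument is preferable precisely because it avoids it.
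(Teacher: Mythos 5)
Your proposal is correct and matches the paper's intended argument: the paper states this proposition immediately after the graph characterization with the words ``As a consequence, we get the following,'' i.e., the bracket $[~,~]_T$ is exactly the transport of the restricted semidirect-product bracket along the linear isomorphism $u \mapsto (T(u),u)$ onto the subalgebra $Gr(T)$, which is what you spelled out (including the use of the Rota-Baxter identity to identify the first component as $T([u,v]_T)$). The only remark is that the paper's displayed formula for $[~,~]_{\ltimes_\lambda}$ contains typos; you correctly used the intended semidirect-product bracket $\big([x,y]_\mathfrak{g},\, \rho(x)v - \rho(y)u + \lambda [u,v]_\mathfrak{h}\big)$.
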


\medskip

We will now define cohomology of a $\lambda$-weighted relative Rota-Baxter operator.  Let $\mathfrak{g}$ and $\mathfrak{h}$ be two Lie algebras and let $\mathfrak{g}$ acts on $\mathfrak{h}$ by a Lie algebra homomorphism $\rho : \mathfrak{g} \rightarrow \mathrm{Der}(\mathfrak{h})$. Let $\pi_\mathfrak{g} \in \mathrm{Hom}(\wedge^2 \mathfrak{g}, \mathfrak{g})$ and $\pi_\mathfrak{h} \in \mathrm{Hom}(\wedge^2 \mathfrak{h}, \mathfrak{h})$ be the elements corresponding to the Lie brackets of $\mathfrak{g}$ and $\mathfrak{h}$, respectively. Let $W = \mathfrak{g} \oplus \mathfrak{h}$ be the direct sum vector space. Consider the Nijenhuis-Richardson bracket $[~,~]_\mathsf{NR}$ on the graded space $\oplus_{n \geq 1} \mathrm{Hom}(\wedge^n W, W)$ of skew-symmetric multilinear maps given by
\begin{align*}
[f, g ]_\mathsf{NR} = f \diamond g - (-1)^{(m-1)(n-1)}~ g \diamond f, ~~~ \text{ where }
\end{align*}
\begin{align*}
(f \diamond g ) (w_1, \ldots, w_{m+n-1}) =  \sum_{\sigma \in \mathbb{S}_{(n, m-1)}} (-1)^\sigma ~ f \big(  g (w_{\sigma (1)}, \ldots, w_{\sigma (n)}), w_{\sigma (n+1)}, \ldots, w_{\sigma (m+n-1)}   \big).
\end{align*}
Then similar to the associative case, the graded space $\oplus_{n \geq 0} \mathrm{Hom}(\wedge^n \mathfrak{h}, \mathfrak{g})$ carries a graded Lie bracket
\begin{align}\label{derived-lie}
\{ \! [ P, Q ] \! \} := (-1)^m ~[[\pi_\mathfrak{g} + \rho, P]_\mathsf{NR}, Q ]_\mathsf{NR}, ~ \text{ for } P \in \mathrm{Hom}(\wedge^m \mathfrak{h}, \mathfrak{g}),~ Q \in \mathrm{Hom}(\wedge^n \mathfrak{h}, \mathfrak{g}).
\end{align}
The explicit formula of the bracket (\ref{derived-lie}) can be found in \cite[Equation (5)]{tang}. On the other hand, the differential $\delta_{ - \lambda \pi_\mathfrak{h}} := - [ \lambda \pi_\mathfrak{h}, - ]_\mathsf{NR}$ restricts to a differential $\delta$
on the graded space $\oplus_{n \geq 0} \mathrm{Hom}(\wedge^n \mathfrak{h}, \mathfrak{g})$. Moreover, we get that $\big( \oplus_{n \geq 0} \mathrm{Hom}(\wedge^n \mathfrak{h}, \mathfrak{g}) , \{ \! [ ~, ~ ] \! \}, \delta \big)$ is a differential graded Lie algebra.

\begin{thm}
Let $\mathfrak{g}$ and $\mathfrak{h}$ be two Lie algebras and $\mathfrak{g}$ acts on $\mathfrak{h}$ by a Lie algebra homomorphism $\rho : \mathfrak{g} \rightarrow \mathrm{Der}(\mathfrak{h})$. A linear map $T : \mathfrak{h} \rightarrow \mathfrak{g}$ is a $\lambda$-weighted relative Rota-Baxter operator if and only if $T$ is a Maurer-Cartan element in the differential graded Lie algebra $\big( \oplus_{n \geq 0} \mathrm{Hom}(\wedge^n \mathfrak{h}, \mathfrak{g}) , \{ \! [ ~, ~ ] \! \}, \delta \big)$.
\end{thm}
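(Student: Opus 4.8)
The plan is to verify the Maurer-Cartan equation directly, exactly paralleling the proof of the corresponding theorem in the associative case. By the construction of the differential graded Lie algebra $\big( \oplus_{n \geq 0} \mathrm{Hom}(\wedge^n \mathfrak{h}, \mathfrak{g}), \{ \! [ ~, ~ ] \! \}, \delta \big)$, the map $T \in \mathrm{Hom}(\wedge^1 \mathfrak{h}, \mathfrak{g})$ is a Maurer-Cartan element precisely when $\delta T + \frac{1}{2} \{ \! [ T, T ] \! \} = 0$. Since every term of this expression lands in $\mathrm{Hom}(\wedge^2 \mathfrak{h}, \mathfrak{g})$, it suffices to evaluate the left-hand side on an arbitrary pair $(u,v) \in \mathfrak{h} \times \mathfrak{h}$ and to show that the resulting element of $\mathfrak{g}$ equals $T\big( \rho(Tu) v - \rho(Tv) u + \lambda [u,v]_\mathfrak{h} \big) - [T(u), T(v)]_\mathfrak{g}$, which vanishes for all $u, v$ exactly when $T$ satisfies the defining identity of a $\lambda$-weighted relative Rota-Baxter operator.

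First I would compute $\delta T$. Writing $\delta = - [\lambda \pi_\mathfrak{h}, -]_\mathsf{NR}$ and expanding the Nijenhuis-Richardson bracket of the arity-$2$ map $\lambda \pi_\mathfrak{h}$ with the arity-$1$ map $T$, the term $\lambda \pi_\mathfrak{h} \diamond T$ drops out: it forces one input of $\pi_\mathfrak{h}$ to be a value of $T$, hence to lie in $\mathfrak{g}$, while $\pi_\mathfrak{h}$, viewed inside $\mathrm{Hom}(\wedge^2 W, W)$, is supported on $\mathfrak{h} \wedge \mathfrak{h}$. What survives is $-\,T \diamond \lambda \pi_\mathfrak{h}$, giving $(\delta T)(u,v) = \lambda\, T([u,v]_\mathfrak{h})$. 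Next I would compute $\{ \! [ T, T ] \! \} = -\,[[\pi_\mathfrak{g} + \rho, T]_\mathsf{NR}, T]_\mathsf{NR}$ by first expanding the inner bracket $X := [\pi_\mathfrak{g} + \rho, T]_\mathsf{NR}$ and then the outer one. Here the decisive simplifications are that $T$ is extended by zero on $\mathfrak{g}$, so any term of the form $T(T(-))$ vanishes, and that $\pi_\mathfrak{g} + \rho$ splits so that $\pi_\mathfrak{g}$ contributes only on $\mathfrak{g} \wedge \mathfrak{g}$ (producing $[T(u), T(v)]_\mathfrak{g}$) while $\rho$ contributes only on $\mathfrak{g} \wedge \mathfrak{h}$ (producing the $\rho(Tu)v$-type terms). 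After collecting terms one obtains $\frac{1}{2}\{ \! [ T, T ] \! \}(u,v) = T\big( \rho(Tu)v - \rho(Tv)u \big) - [T(u), T(v)]_\mathfrak{g}$, and adding the two contributions yields the claimed expression. Alternatively, one could substitute directly into the explicit formula for $\{ \! [ ~, ~ ] \! \}$ recorded in \cite[Equation~(5)]{tang} instead of re-expanding the nested brackets.

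The conceptual content is minimal; the work is entirely bookkeeping, and that is also where the only genuine pitfall lies. The main obstacle I anticipate is tracking (i) the signs produced by the $(1,1)$- and $(2,0)$-shuffles in each $\diamond$-product together with the symmetry factor $(-1)^{(m-1)(n-1)}$ in the Nijenhuis-Richardson bracket, and (ii) which components of $\pi_\mathfrak{g} + \rho$ and of $\pi_\mathfrak{h}$ survive once an argument has been replaced by a value of $T$ and therefore lies in $\mathfrak{g}$. Provided one consistently treats $T$ and the structure maps as the corresponding elements of $\mathrm{Hom}(\wedge^\bullet W, W)$ extended by zero off their natural domains, the $T \circ T$ terms drop out and the remaining terms assemble exactly into the Rota-Baxter identity, just as in the associative theorem of Subsection \ref{subsection31}.
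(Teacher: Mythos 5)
Your proposal is correct and takes essentially the same approach as the paper: both proofs verify the Maurer--Cartan equation by evaluating on a pair $(u,v)$, obtaining $(\delta T)(u,v) = \lambda\, T([u,v]_\mathfrak{h})$ and $\tfrac{1}{2}\{ \! [ T, T ] \! \}(u,v) = T\big(\rho(Tu)v - \rho(Tv)u\big) - [Tu,Tv]_\mathfrak{g}$, whose sum vanishes for all $u,v$ precisely when $T$ satisfies the weighted relative Rota--Baxter identity. The only cosmetic difference is that the paper reads off $\{ \! [ T, T ] \! \}$ directly from the explicit formula in \cite[Equation (5)]{tang} rather than re-expanding the nested Nijenhuis--Richardson brackets, which is exactly the alternative you note at the end of your second paragraph.
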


\begin{proof}
From the explicit formula \cite[Equation (5)]{tang}  of the bracket $\{ \! [ ~, ~ ] \! \}$, we get that
\begin{align*}
\{ \! [ T, T ] \! \} (u, v) = 2 \big( T (\rho (Tu) v) - T (\rho (Tv)u) - [Tu, Tv]_\mathfrak{g}  \big).
\end{align*}
On the other hand, We have $(\delta T)(u, v) = T (\lambda ~ [u, v]_\mathfrak{h}),$ for $u, v \in \mathfrak{h}$. Hence $T$ satisfies $\delta T + \frac{1}{2} \{ \! [ T, T ] \! \} = 0 $ if and only if $T$ is a  $\lambda$-weighted relative Rota-Baxter operator.
\end{proof}

It follows from the above theorem that a $\lambda$-weighted relative Rota-Baxter operator $T$ induces a differential $\delta_T := \delta + \{ \! [ T, ~ ] \! \}$ on the graded space  $\oplus_{n \geq 0} \mathrm{Hom}(\wedge^n \mathfrak{h}, \mathfrak{g})$. The corresponding cohomology groups are called the cohomology of $T$, denoted by $H^\ast_T (\mathfrak{h}, \mathfrak{g}).$

\medskip

The following result is similar to the associative case.

\begin{thm}
Let $T : \mathfrak{h} \rightarrow \mathfrak{g}$ be a $\lambda$-weighted relative Rota-Baxter operator. Then 
\begin{itemize}
\item[(i)] the triple $\big(  \oplus_{n \geq 0} \mathrm{Hom}(\wedge^n \mathfrak{h}, \mathfrak{g}) , \{ \! [ ~, ~ ] \! \}, \delta \big)$ is a differential graded Lie algebra.
\item[(ii)] For any linear map $T' : \mathfrak{h} \rightarrow \mathfrak{g}$, the sum $T + T'$ is a $\lambda$-weighted relative Rota-Baxter operator if and only if $T'$ is a Maurer-Cartan element in the differential graded Lie algebra $\big(  \oplus_{n \geq 0} \mathrm{Hom}(\wedge^n \mathfrak{h}, \mathfrak{g}) , \{ \! [ ~, ~ ] \! \}, \delta_T \big)$.
\end{itemize}
\end{thm}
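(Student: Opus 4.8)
The plan is to reduce both parts of the theorem to the already-established Maurer-Cartan characterization together with the standard algebraic machinery from the associative case, which the excerpt repeatedly invokes by analogy. For part (i), I would first verify that $\big(\oplus_{n\geq 0}\mathrm{Hom}(\wedge^n\mathfrak{h},\mathfrak{g}),\{\![~,~]\!\},\delta\big)$ is a differential graded Lie algebra. This is exactly the statement that was asserted just before the previous theorem, and the proof mirrors Observations I--III of the associative case: the bracket $\{\![~,~]\!\}$ is the derived bracket of the Nijenhuis-Richardson bracket $[~,~]_\mathsf{NR}$ along the Maurer-Cartan element $\pi_\mathfrak{g}+\rho$, hence it is a genuine graded Lie bracket; the map $\delta=-[\lambda\pi_\mathfrak{h},-]_\mathsf{NR}$ squares to zero because $\lambda\pi_\mathfrak{h}$ is itself Maurer-Cartan (as $\pi_\mathfrak{h}$ defines a Lie bracket); and $\delta$ is a graded derivation of $\{\![~,~]\!\}$ because $\pi_\mathfrak{g}+\rho$ and $\lambda\pi_\mathfrak{h}$ graded-commute, $[\pi_\mathfrak{g}+\rho,\lambda\pi_\mathfrak{h}]_\mathsf{NR}=0$, which encodes the compatibility of the $\mathfrak{g}$-action and the bracket on $\mathfrak{h}$. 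The derivation identity then follows from the same four-line Jacobi computation carried out in Observation III, simply transported from $[~,~]_\mathsf{G}$ to $[~,~]_\mathsf{NR}$.

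For part (ii), the key input is that $T$ is a $\lambda$-weighted relative Rota-Baxter operator, hence a Maurer-Cartan element, so $\delta T+\tfrac{1}{2}\{\![T,T]\!\}=0$ by the preceding theorem. I would then expand the Maurer-Cartan equation for the perturbation $T+T'$ using bilinearity of $\{\![~,~]\!\}$ and linearity of $\delta$:
\begin{align*}
\delta(T+T')+\tfrac{1}{2}\{\![T+T',\,T+T']\!\}
&=\Big(\delta T+\tfrac{1}{2}\{\![T,T]\!\}\Big)+\delta T'+\{\![T,T']\!\}+\tfrac{1}{2}\{\![T',T']\!\}\\
&=\delta_T(T')+\tfrac{1}{2}\{\![T',T']\!\},
\end{align*}
where the first parenthesized term vanishes since $T$ is Maurer-Cartan, and I have used $\delta_T=\delta+\{\![T,-]\!\}$ together with the graded symmetry $\{\![T,T']\!\}=\{\![T',T]\!\}$ of the degree $1$ elements. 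Thus $T+T'$ is Maurer-Cartan in the $\delta$-dgLa precisely when $T'$ is Maurer-Cartan in the $\delta_T$-dgLa, which is the claim. This is the exact Lie-theoretic transcription of the twisting argument given in the associative case.

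I expect no genuine obstacle here: the entire content is formal and parallels the associative development step for step. The only point requiring mild care is confirming that $\delta_T=\delta+\{\![T,-]\!\}$ is indeed a differential, i.e.\ $\delta_T^2=0$; this is the standard fact that twisting a dgLa by a Maurer-Cartan element $T$ yields a new differential, and it follows from $\delta T+\tfrac{1}{2}\{\![T,T]\!\}=0$ combined with the derivation property of $\delta$ and the graded Jacobi identity for $\{\![~,~]\!\}$ established in part (i). Once that is in place, the computation above is purely mechanical, so the substance of the theorem lies entirely in the structural facts of part (i) rather than in any new estimate or construction.
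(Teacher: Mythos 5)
Your proposal is correct and takes essentially the same route as the paper: the paper gives no proof of this theorem beyond the remark that it is ``similar to the associative case,'' and your argument is exactly the transcription of that associative development --- the derived-bracket/Maurer--Cartan observations for part (i), and for part (ii) the expansion $\delta(T+T')+\tfrac{1}{2}\{\![T+T',T+T']\!\} = \delta_T(T')+\tfrac{1}{2}\{\![T',T']\!\}$ using that $T$ is Maurer--Cartan and that degree-one elements have symmetric bracket. The point you flag at the end (that $\delta_T^2=0$) is likewise only asserted in the paper, and your justification via the standard fact that twisting a differential graded Lie algebra by a Maurer--Cartan element yields a new differential is the intended one.
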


\medskip

\medskip

In the following, we will show that the cohomology of $T$ can be described in terms of Chevalley-Eilenberg cohomology. We define a map $\rho^T : \mathfrak{h} \rightarrow \mathrm{End}(\mathfrak{g})$ by
\begin{align*}
\rho^T (u) (x) := T (\rho (x) u) + [Tu, x]_\mathfrak{g}, ~ \text{ for } u \in \mathfrak{h}, x \in \mathfrak{g}. 
\end{align*}

\begin{prop}
The map $\rho^T :  \mathfrak{h} \rightarrow \mathrm{End}(\mathfrak{g})$  defines a representation of the Lie algebra $(\mathfrak{h}, [~,~]_T)$ on the vector space $\mathfrak{g}$.
\end{prop}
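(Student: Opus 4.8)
The plan is to verify directly that $\rho^T$ is a Lie algebra homomorphism from $(\mathfrak{h}, [~,~]_T)$ into $\mathrm{End}(\mathfrak{g})$ equipped with its commutator bracket; since $\rho^T$ is visibly linear, this amounts to checking
$$\rho^T([u,v]_T)(x) = \rho^T(u)\big(\rho^T(v)(x)\big) - \rho^T(v)\big(\rho^T(u)(x)\big)$$
for all $u, v \in \mathfrak{h}$ and $x \in \mathfrak{g}$. First I would expand the right-hand side by substituting the definition of $\rho^T$ twice. Writing $\rho^T(v)(x) = T(\rho(x)v) + [Tv, x]_\mathfrak{g}$ and feeding this into $\rho^T(u)$, each of $\rho^T(u)\rho^T(v)(x)$ and $\rho^T(v)\rho^T(u)(x)$ splits into four pieces: a double-$T$ piece $T(\rho(T(\rho(x)v))u)$, a piece $T(\rho([Tv,x]_\mathfrak{g})u)$, a mixed piece $[Tu, T(\rho(x)v)]_\mathfrak{g}$, and a pure bracket piece $[Tu, [Tv,x]_\mathfrak{g}]_\mathfrak{g}$, together with the $u \leftrightarrow v$ analogues.

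Next I would expand the left-hand side using $[u,v]_T = \rho(Tu)v - \rho(Tv)u + \lambda[u,v]_\mathfrak{h}$ and the defining relative Rota-Baxter identity $T([u,v]_T) = [Tu, Tv]_\mathfrak{g}$, obtaining
$$\rho^T([u,v]_T)(x) = T\big(\rho(x)\rho(Tu)v\big) - T\big(\rho(x)\rho(Tv)u\big) + \lambda\, T\big(\rho(x)[u,v]_\mathfrak{h}\big) + [[Tu,Tv]_\mathfrak{g}, x]_\mathfrak{g}.$$
The matching then proceeds in four independent blocks. The pure bracket pieces $[Tu,[Tv,x]_\mathfrak{g}]_\mathfrak{g} - [Tv,[Tu,x]_\mathfrak{g}]_\mathfrak{g}$ reproduce the term $[[Tu,Tv]_\mathfrak{g}, x]_\mathfrak{g}$ by the Jacobi identity in $\mathfrak{g}$. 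Applying the Rota-Baxter identity to the mixed pieces $[Tu, T(\rho(x)v)]_\mathfrak{g}$ and $[Tv, T(\rho(x)u)]_\mathfrak{g}$ produces exactly the double-$T$ terms with the opposite sign, so those cancel against the $T(\rho(T(\rho(x)v))u)$-type pieces from the first substitution. The surviving $\lambda$-contributions are reconciled using that each $\rho(x)$ is a derivation of $\mathfrak{h}$, namely $\rho(x)[u,v]_\mathfrak{h} = [\rho(x)u, v]_\mathfrak{h} + [u, \rho(x)v]_\mathfrak{h}$, together with the antisymmetry of $[~,~]_\mathfrak{h}$.

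Finally, the remaining non-$\lambda$ terms are handled by the homomorphism property $\rho([a,b]_\mathfrak{g}) = \rho(a)\rho(b) - \rho(b)\rho(a)$: rewriting $T(\rho([Tv,x]_\mathfrak{g})u)$ and $T(\rho([Tu,x]_\mathfrak{g})v)$ in this way and collecting terms leaves precisely $T(\rho(x)\rho(Tu)v) - T(\rho(x)\rho(Tv)u)$, which matches the left-hand side. I expect the main obstacle to be purely organizational: there are about a dozen terms to track, and the cancellations are not termwise but occur only after each of the four structural identities (Jacobi in $\mathfrak{g}$, the relative Rota-Baxter relation, the derivation property, and the homomorphism property of $\rho$) has been applied to the appropriate group of terms. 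Keeping the bookkeeping consistent, rather than any conceptual difficulty, is the real work here.
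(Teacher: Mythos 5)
Your proposal is correct and follows essentially the same route as the paper's proof: expand the commutator $\rho^T(u)\rho^T(v)x - \rho^T(v)\rho^T(u)x$, use the relative Rota-Baxter identity on the mixed terms $[Tu, T(\rho(x)v)]_\mathfrak{g}$ to cancel the double-$T$ pieces, apply the Jacobi identity to the pure bracket pieces, the homomorphism property of $\rho$ to the remaining non-$\lambda$ terms, and the derivation property of $\rho(x)$ to the $\lambda$-terms, finally recognizing $[[Tu,Tv]_\mathfrak{g},x]_\mathfrak{g} = [T[u,v]_T,x]_\mathfrak{g}$. The only (cosmetic) difference is that you expand both sides and match in blocks, while the paper transforms the commutator step by step into $\rho^T([u,v]_T)x$.
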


\begin{proof}
For any $u, v \in \mathfrak{h}$ and $x \in \mathfrak{g}$, we have
\begin{align*}
&[\rho^T (u), \rho^T (v) ]  x \\
&= \rho^T (u) \rho^T (v) x  - \rho^T (v) \rho^T (u) x \\
&= \rho^T (u) \big( T (\rho (x) v) + [ Tv, x]_\mathfrak{g}  \big) -\rho^T (v) \big( T (\rho (x) u) + [ Tu, x]_\mathfrak{g}  \big) \\
&= T \big(   (\rho T (\rho (x) v)) u  )  \big) + [Tu, T (\rho (x) v) ]_\mathfrak{g} + T \big( \rho ([Tv, x]_\mathfrak{g}) u  \big) + [Tu, [Tv, x]_\mathfrak{g} ]_\mathfrak{g} \\
& \qquad \qquad - T \big(   (\rho T (\rho (x) u)) v  )  \big) - [Tv, T (\rho (x) u) ]_\mathfrak{g} - T \big( \rho ([Tu, x]_\mathfrak{g}) v  \big) - [Tv, [Tu, x]_\mathfrak{g} ]_\mathfrak{g} \\
&=  T \big(   (\rho T (\rho (x) v)) u  )  \big) +
 T \big(  \rho (Tu) \rho(x) v - \rho (T   (\rho(x) v) )u + \lambda~[u, \rho(x) v]_\mathfrak{h}  \big)
  + T \big( \rho ([Tv, x]_\mathfrak{g}) u  \big) + [Tu, [Tv, x]_\mathfrak{g} ]_\mathfrak{g} \\
&-  T \big(   (\rho T (\rho (x) u)) v  )  \big) -
 T \big(  \rho (Tv) \rho(x) u - \rho (T   (\rho(x) u) )v  + \lambda~[v, \rho(x) u]_\mathfrak{h}  \big)
  - T \big( \rho ([Tu, x]_\mathfrak{g}) v  \big) - [Tv, [Tu, x]_\mathfrak{g} ]_\mathfrak{g}  \\
 &= T \big(   \rho (Tu) \rho (x) v \big) + T \big(  \rho ([Tv, x]_\mathfrak{g}) u \big) - T \big(  \rho (Tv) \rho (x) u \big) - T \big(  \rho ([Tu, x]_\mathfrak{g}) v \big) \\
& \qquad \qquad + \lambda T ([u, \rho (x) v]_\mathfrak{h}) - \lambda T ([v, \rho(x) u]_\mathfrak{h}) + [[Tu, Tv]_\mathfrak{g}, x ]_\mathfrak{g} \\
&= - T \big( \rho (x) \rho (Tv) u \big) + T \big( \rho (x) \rho (Tu) v  \big) + \lambda T (\rho (x) [u, v]_\mathfrak{h}) + [T [u, v]_T, x]_\mathfrak{g} \\
&= T \big(  \rho(x) [u, v]_T \big) + [T [u, v]_T, x]_\mathfrak{g}  \\
&= \rho^T ([u, v]_T)x.
\end{align*}
This shows that $\rho^T$ is a representation. Hence the proof.
\end{proof}

Let $T : \mathfrak{h} \rightarrow \mathfrak{g}$ be a $\lambda$-weighted relative Rota-Baxter operator. Then it follows from the above proposition that one may consider the Chevalley-Eilenberg cohomology of the Lie algebra $(\mathfrak{h}, [~,~]_T)$ with coefficients in the representation $(\mathfrak{g}, \rho^T)$. More precisely, the $n$-th cochain group is $C^n_\mathsf{CE} (\mathfrak{h}, \mathfrak{g}) = \mathrm{Hom}(\wedge^n \mathfrak{h}, \mathfrak{g})$, for $n \geq 0$, and the coboundary map $\delta_\mathsf{CE} : C^n_\mathsf{CE} (\mathfrak{h}, \mathfrak{g}) \rightarrow C^{n+1}_\mathsf{CE} (\mathfrak{h}, \mathfrak{g})$ given by
\begin{align*}
&(\delta_\mathsf{CE} f)(u_1, \ldots, u_{n+1}) \\
&= \sum_{i=1}^{n+1} (-1)^{i+1} ~   T \big( \rho (f(u_1, \ldots, \widehat{u_i}, \ldots, u_{n+1})) u_i \big) + \sum_{i=1}^{n+1} (-1)^{i+1} ~ [ Tu_i, f(u_1, \ldots, \widehat{u_i}, \ldots, u_{n+1})]_\mathfrak{g}  \\
&+ \sum_{i < i} (-1)^{i+j} ~ f \big( \rho (Tu_i) u_j - \rho (Tu_j) u_i + \lambda [u_i, u_j]_\mathfrak{h}, u_1, \ldots, \widehat{u_i}, \ldots, \widehat{u_j}, \ldots, u_{n+1}  \big).
\end{align*}
The corresponding cohomology groups are denoted by $H^\ast_\mathsf{CE}(\mathfrak{h}, \mathfrak{g})$. Moreover, a direct computation says that
\begin{align*}
\delta_T f = (-1)^n ~\delta_\mathsf{CE} f, ~ \text{ for } f \in \mathrm{Hom} (\wedge^n \mathfrak{h}, \mathfrak{g}).
\end{align*}
Hence the cohomology $H^\ast_T (\mathfrak{h}, \mathfrak{g})$ is isomorphic to the Chevalley-Eilenberg cohomology $H^\ast_\mathsf{CE}(\mathfrak{h}, \mathfrak{g})$.

\begin{remark}
Similar to Section \ref{sec-4}, one may study deformations of $\lambda$-weighted relative Rota-Baxter operators on Lie algebras. The main results are similar to those given in Section \ref{sec-4}. 
\end{remark}

\subsection{Relation with associative case}

Let $A$ be an associative algebra. Then there is a Lie algebra structure on $A$ given by the commutator bracket $[a, b]_c = a \cdot_A b - b \cdot_A a$, for $a, b \in A$. We denote this Lie algebra by $A_c$. If $A$ and $B$ are two associative algebras and $B$ is an associative $A$-bimodule, then it is easy to see that the Lie algebra $A_c$ acts on the Lie algebra $B_c$ by a Lie algebra homomorphism
\begin{align*}
\rho : A_c \rightarrow \mathrm{Der}(B_c), ~ \rho (a)(u) = a \cdot u - u \cdot a, ~ \text{ for } a \in A_c, u \in B_c.
\end{align*}

With the above notations, we have the following.

\begin{prop}\label{skew-prop}
\begin{itemize}
\item[(i)] Let $T \in \mathsf{rRB}_\lambda (B, A)$. Then $T \in \mathsf{rRB}_\lambda (B_c, A_c)$.
\item[(ii)] Let $T, T' \in \mathsf{rRB}_\lambda (B, A)$. If $(\phi, \psi)$ is a morphism from $T$ to $T'$ in the category $\mathsf{rRB}_\lambda (B, A)$, then $(\phi, \psi)$ is also a morphism from $T$ to $T'$ in the category  $\mathsf{rRB}_\lambda (B_c, A_c)$.
\end{itemize}
\end{prop}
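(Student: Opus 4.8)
The plan is to prove both parts by direct substitution, using the fact that the commutator bracket and the induced action are built tautologically from the associative operations already present. For part (i), I would start from the hypothesis that $T \in \mathsf{rRB}_\lambda(B,A)$, so that $T(u) \cdot_A T(v) = T(T(u) \cdot v + u \cdot T(v) + \lambda\, u \cdot_B v)$ holds for all $u, v \in B$ by identity (\ref{rel-rb-id}). The goal is to verify the Lie-theoretic defining relation
\begin{align*}
[T(u), T(v)]_c = T\big( \rho(T u) v - \rho(T v) u + \lambda\, [u,v]_c \big), \quad \text{for } u, v \in B,
\end{align*}
where $[~,~]_c$ denotes the commutator bracket on $A_c$ and on $B_c$, and $\rho$ is the action defined just above the statement.

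First I would expand the left-hand side as $[T(u), T(v)]_c = T(u) \cdot_A T(v) - T(v) \cdot_A T(u)$ and apply the associative relation (\ref{rel-rb-id}) to each term, obtaining
\begin{align*}
[T(u), T(v)]_c = T\big( T(u) \cdot v + u \cdot T(v) + \lambda\, u \cdot_B v \big) - T\big( T(v) \cdot u + v \cdot T(u) + \lambda\, v \cdot_B u \big).
\end{align*}
Next I would expand the right-hand side using the definitions $\rho(a)(u) = a \cdot u - u \cdot a$ and $[u,v]_c = u \cdot_B v - v \cdot_B u$, which gives $\rho(Tu)v - \rho(Tv)u + \lambda[u,v]_c = (T(u)\cdot v - v \cdot T(u)) - (T(v)\cdot u - u \cdot T(v)) + \lambda(u\cdot_B v - v\cdot_B u)$. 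Applying the linear map $T$ and comparing with the expanded left-hand side, the two expressions match term by term; the identity follows at once. The only thing being used is linearity of $T$ together with (\ref{rel-rb-id}), so this is a short bookkeeping computation rather than a genuine obstacle.

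For part (ii), I would simply check that each of the three morphism conditions in the associative category specializes to the two conditions in the Lie category. The equation $\phi \circ T = T' \circ \psi$ is identical in both settings, so it transfers verbatim. The remaining condition $\psi(\rho(a)u) = \rho(\phi(a))\psi(u)$ in the Lie case unpacks to $\psi(a \cdot u - u \cdot a) = \phi(a)\cdot\psi(u) - \psi(u)\cdot\phi(a)$, which I would derive by subtracting the two associative bimodule-compatibility relations $\psi(a\cdot u) = \phi(a)\cdot\psi(u)$ and $\psi(u\cdot a) = \psi(u)\cdot\phi(a)$ and using linearity of $\psi$. I would also note that any associative algebra homomorphism $\phi$ is automatically a Lie algebra homomorphism for the commutator bracket, so $\phi$ and $\psi$ qualify as morphisms of the underlying Lie algebras. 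The main (and quite mild) point to be careful about is matching the sign conventions and the exact form of $\rho$ so that the skew-symmetrization is performed consistently; beyond that the verification is routine, and I would state it concisely rather than writing out every line.
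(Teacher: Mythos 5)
Your proposal is correct and follows essentially the same route as the paper's own proof: part (i) expands the commutator, applies the associative identity (\ref{rel-rb-id}) to each of the two terms, and regroups using linearity of $T$; part (ii) notes that associative algebra morphisms are automatically Lie algebra morphisms for the commutator bracket and obtains the compatibility $\psi(\rho(a)u) = \rho(\phi(a))\psi(u)$ by subtracting the two associative bimodule conditions. There are no gaps.
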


\begin{proof}
(i) For any $u, v \in B_c$, we have
\begin{align*}
[T(u), T(v)]_{A_c} =~& T(u) \cdot_A T(v) - T(v) \cdot_A T(u) \\
=~& T (T(u) \cdot v + u \cdot T(v) + \lambda ~ u \cdot_B v) - T (T(v) \cdot u + v \cdot T(u) + \lambda ~ v \cdot_B u) \\
=~& T ( \rho (Tu) v - \rho (Tv) u + \lambda ~ [u, v]_{B_c} ).
\end{align*}
This proves that $T \in \mathsf{rRB}_\lambda (B_c, A_c).$

(ii) Since $\phi : A \rightarrow A$ (resp. $\psi : B \rightarrow B$) is an algebra morphism, it follows that $\phi : A_c \rightarrow A_c$ (resp. $\psi: B_c \rightarrow B_c$) is a Lie algebra morphism. Moreover, we have $\phi \circ T = T' \circ \psi$. Finally,
\begin{align*}
\psi (\rho (a) u) = \psi (a \cdot u - u \cdot a) = \phi (a) \cdot \psi (u) - \psi (u) \cdot \phi (a) = \rho (\phi (a)) \psi (u).
\end{align*}
This shows that $(\phi, \psi)$ is a morphism from $T$ to $T'$ in the category $\mathsf{rRB}_\lambda (B_c, A_c)$.
\end{proof}

Let $T \in \mathsf{rRB}_\lambda (B, A)$.
In the following, we find the relation between the cohomologies $H^\ast_T (B,A)$ and $H^\ast_T (B_c, A_c)$. We start by recalling the following standard result. Let $A$ be an associative algebra and $M$ be an $A$-bimodule. Then $M$ can be considered as a representation of the Lie algebra $A_c$ with the action $\rho : A_c \rightarrow \mathrm{End}(M)$ given by $\rho (a)(m) = a \cdot m - m \cdot a$, for $a \in A_c$ and $m \in M$. We denote this representation by $M_c$.

\begin{prop}\label{sn-map}
The collection $\{ S_n \}_{n \geq 0}$ of maps $S_n : \mathrm{Hom}(A^{\otimes n}, M) \rightarrow \mathrm{Hom}(\wedge^n A_c, M_c)$ defined by
\begin{align*}
S_n (f) (a_1, \ldots, a_n ) = \sum_{\sigma \in \mathbb{S}_n} (-1)^\sigma ~ f (a_{\sigma(1)}, \ldots, a_{\sigma (n)})
\end{align*}
is a morphism from the Hochschild cochain complex of $A$ with coefficients in the $A$-bimodule $M$ to the Chevalley-Eilenberg cochain complex of $A_c$ with coefficients in the representation $M_c$.
\end{prop}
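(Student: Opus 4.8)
The plan is to show that the family $\{S_n\}$ is a cochain map, that is, that $S_{n+1}\circ d_{\mathsf H} = \delta_{\mathsf{CE}}\circ S_n$ for every $n\geq 0$. Here $d_{\mathsf H}$ denotes the Hochschild coboundary of $A$ with values in the bimodule $M$,
\begin{align*}
(d_{\mathsf H}f)(a_1,\ldots,a_{n+1}) =~& a_1\cdot f(a_2,\ldots,a_{n+1}) + (-1)^{n+1}\, f(a_1,\ldots,a_n)\cdot a_{n+1} \\
&+ \sum_{i=1}^{n} (-1)^i\, f(a_1,\ldots,a_i\cdot_A a_{i+1},\ldots,a_{n+1}),
\end{align*}
while $\delta_{\mathsf{CE}}$ is the Chevalley--Eilenberg coboundary of $A_c$ with values in $M_c$, whose terms split into a representation part built from $\rho(a_i)(m)=a_i\cdot m - m\cdot a_i$ and a bracket part built from $[a_i,a_j]_c=a_i\cdot_A a_j - a_j\cdot_A a_i$. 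I would therefore prove the required identity in two independent pieces: matching the two outer (action) terms of $d_{\mathsf H}$ against the representation part of $\delta_{\mathsf{CE}}$, and matching the inner (multiplication) terms against the bracket part.

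For the action terms I would apply $S_{n+1}$ and reorganise the sum over $\mathbb{S}_{n+1}$ according to which index $k$ is sent to the first position (for the term $a_1\cdot f(\cdots)$) or to the last position (for the term $f(\cdots)\cdot a_{n+1}$). Pulling $a_k$ to the front costs the sign $(-1)^{k-1}$ and leaves precisely the antisymmetrization of $f$ over the remaining $n$ entries, so the first term becomes $\sum_{k}(-1)^{k+1}\, a_k\cdot (S_nf)(a_1,\ldots,\widehat{a_k},\ldots,a_{n+1})$; the analogous bookkeeping for the last term gives $\sum_{k}(-1)^{k}\,(S_nf)(a_1,\ldots,\widehat{a_k},\ldots,a_{n+1})\cdot a_k$. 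Adding the two produces exactly $\sum_{k}(-1)^{k+1}\rho(a_k)(S_nf)(a_1,\ldots,\widehat{a_k},\ldots,a_{n+1})$, which is the representation part of $\delta_{\mathsf{CE}}(S_nf)$. This step is pure sign bookkeeping and uses only the definition of $M_c$.

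For the multiplication terms the key device is the substitution $\sigma\mapsto\sigma\tau_i$, where $\tau_i$ transposes the positions $i$ and $i+1$. Since this substitution reverses the sign $(-1)^\sigma$, it converts the skew-symmetrization of the ordered product $a_{\sigma(i)}\cdot_A a_{\sigma(i+1)}$ into one half of the skew-symmetrization of the commutator $[a_{\sigma(i)},a_{\sigma(i+1)}]_c$. Thus
\begin{align*}
&S_{n+1}\Big(\sum_{i=1}^{n} (-1)^i\, f(a_1,\ldots,a_i\cdot_A a_{i+1},\ldots,a_{n+1})\Big) \\
&\qquad = \tfrac12 \sum_{i=1}^{n} (-1)^i \sum_{\sigma\in\mathbb{S}_{n+1}} (-1)^\sigma\, f\big(a_{\sigma(1)},\ldots,[a_{\sigma(i)},a_{\sigma(i+1)}]_c,\ldots,a_{\sigma(n+1)}\big),
\end{align*}
with the commutator occupying the $i$-th slot of $f$. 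It then remains to identify this expression with the bracket part $\sum_{p<q}(-1)^{p+q}(S_nf)\big([a_p,a_q]_c,a_1,\ldots,\widehat{a_p},\ldots,\widehat{a_q},\ldots,a_{n+1}\big)$ of $\delta_{\mathsf{CE}}(S_nf)$. For this I would group each inner sum by the unordered pair $\{p,q\}=\{\sigma(i),\sigma(i+1)\}$ carrying the commutator: the two orderings of this pair contribute equally and so absorb the factor $\tfrac12$, while summing over the slot $i$ together with the antisymmetrization over the remaining $n-1$ entries reconstitutes the total antisymmetrization $S_n$ present on the Chevalley--Eilenberg side.

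The main obstacle is precisely this last reconciliation. One must verify, with the correct signs, that the partial antisymmetrizations arising from the various slots $i$ assemble into the totally antisymmetric cochain $S_nf$, and that the accumulated signs $(-1)^i$ combine with the signs coming from moving the commutator into the first slot so as to match the Koszul signs $(-1)^{p+q}$ of the Chevalley--Eilenberg differential. I would carry this out by comparing the coefficient of a fixed monomial $f(\ldots,a_r\cdot_A a_s,\ldots)$ on the two sides; the computation is elementary but sign-intensive, and I would first settle the small cases $n=1,2$ explicitly to fix all conventions before treating general $n$.
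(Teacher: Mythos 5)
Your proposal is correct, but be aware that the paper does not prove this proposition at all: it is explicitly ``recalled'' as a standard result (it is the classical antisymmetrization comparison between Hochschild and Chevalley--Eilenberg cohomology), and no argument is given. So your direct verification that $S_{n+1}\circ d_\mathsf{H}=\delta_\mathsf{CE}\circ S_n$ is an addition rather than a variant of the paper's argument. Both of your devices are sound, and I checked the signs: grouping $\mathbb{S}_{n+1}$ by the preimage of the first (resp.\ last) position does yield $\sum_{k}(-1)^{k+1}\,a_k\cdot (S_nf)(a_1,\ldots,\widehat{a_k},\ldots,a_{n+1})$ and $\sum_{k}(-1)^{k}\,(S_nf)(a_1,\ldots,\widehat{a_k},\ldots,a_{n+1})\cdot a_k$, whose sum is the representation part of $\delta_\mathsf{CE}(S_nf)$; and the involution $\sigma\mapsto\sigma\tau_i$ correctly trades the antisymmetrized products for one half of the antisymmetrized commutators. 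The step you defer also closes, with no hidden obstruction: the needed identity is that for $\sigma\in\mathbb{S}_{n+1}$ with $\sigma(i)=p<q=\sigma(i+1)$, and $\pi\in\mathbb{S}_n$ the induced permutation placing $[a_p,a_q]_c$ (listed first among the $n$ arguments of $S_nf$) into slot $i$ with the remaining entries arranged as in $\sigma$, one has $(-1)^i(-1)^\sigma=(-1)^{p+q}(-1)^\pi$. Since for fixed $(p,q,i)$ the correspondence $\sigma\mapsto\pi$ converts right-composition by a transposition of the remaining entries on one side into the same operation on the other, it suffices to verify this on one representative; for $\sigma=\mathrm{id}$ (so $p=i$, $q=i+1$) the permutation $\pi$ is an $i$-cycle of sign $(-1)^{i-1}$, and both sides equal $(-1)^i$. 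With that lemma written out, your outline becomes a complete proof; it buys self-containedness where the paper buys brevity by citation.
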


\medskip

Let $A$ and $B$ be two associative algebras and $B$ be an associative $A$-bimodule. Let $T \in \mathsf{rRB}_\lambda (B, A)$ be a $\lambda$-weighted relative Rota-Baxter operator. Then we have seen in Subsection \ref{subsec-hoch} that the cohomology of $T$ is isomorphic to the Hochschild cohomology of $(B, \cdot_T)$ with coefficients in the bimodule $(A, l^T, r^T)$.

On the other hand, we have from Proposition \ref{skew-prop} that $T \in \mathsf{rRB}_\lambda (B_c, A_c)$ . The cohomology of $T$ (as a $\lambda$-weighted relative Rota-Baxter operator on Lie algebra) is isomorphic to the Chevalley-Eilenberg cohomology of $(B, [~,~]_T)$ with coefficients in the representation $(A, \rho^T)$, where
\begin{align*}
[u, v]_T :=  u \cdot_T v - v \cdot_T u ~~~~ \text{ and } ~~~~
\rho^T (u) (a) := l^T_u (a) - r^T_u (a).
\end{align*}

Thus, as a consequence of Proposition \ref{sn-map}, we get the following.

\begin{thm}
Let $A$ and $B$ be two associative algebras and $B$ be an associative $A$-bimodule. Let $T \in \mathsf{rRB}_\lambda (B, A)$ be a $\lambda$-weighted relative Rota-Baxter operator. Then the collection $\{ S_n \}_{n \geq 0}$ of maps 
\begin{align*}
S_n : \mathrm{ Hom}(B^{\otimes n}, A) \rightarrow \mathrm{Hom}(\wedge^n B, A), ~~~ S_n (f) (u_1, \ldots, u_n) = \sum_{\sigma \in \mathbb{S_n}} (-1)^\sigma ~ f (u_{\sigma(1)}, \ldots, f_{\sigma (n)})
\end{align*} 
induces a morphism of cohomologies from $H^\ast_T (B, A)$ to $H^\ast_T (B_c, A_c)$.
\end{thm}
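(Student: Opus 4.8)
The plan is to reduce the statement to Proposition \ref{sn-map} by recognizing both cohomologies of $T$ as the Hochschild and Chevalley-Eilenberg cohomologies of a \emph{single} associative algebra and its associated commutator Lie algebra. First I would recall the two identifications established earlier. By Subsection \ref{subsec-hoch}, the cohomology $H^\ast_T (B,A)$ is isomorphic to $H^\ast_\mathsf{H}(B,A)$, namely the Hochschild cohomology of the associative algebra $(B, \cdot_T)$ with coefficients in the bimodule $(A, l^T, r^T)$, the isomorphism coming from $d_T f = (-1)^n d_\mathsf{H} f$ for $f \in \mathrm{Hom}(B^{\otimes n}, A)$. Dually, the cohomology $H^\ast_T (B_c, A_c)$ (computed by the Lie-theoretic differential $\delta_T$) is isomorphic to the Chevalley-Eilenberg cohomology of the Lie algebra $(B, [~,~]_T)$ with coefficients in the representation $(A, \rho^T)$, via $\delta_T f = (-1)^n \delta_\mathsf{CE} f$ for $f \in \mathrm{Hom}(\wedge^n B, A)$.

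The key observation, which I would state explicitly, is that these two coefficient data are exactly the commutator constructions of a common pair. The Lie algebra $(B, [~,~]_T)$ is precisely the commutator Lie algebra $(B, \cdot_T)_c$, since $[u,v]_T = u \cdot_T v - v \cdot_T u$; and the representation $(A, \rho^T)$ is precisely the representation $(A, l^T, r^T)_c$ attached to the bimodule $(A, l^T, r^T)$ as in the paragraph preceding Proposition \ref{sn-map}, since $\rho^T(u)(a) = l^T_u(a) - r^T_u(a)$. Consequently, applying Proposition \ref{sn-map} with the associative algebra taken to be $(B, \cdot_T)$ and the bimodule $M$ taken to be $(A, l^T, r^T)$ yields that the skew-symmetrization maps $\{ S_n \}_{n \geq 0}$ form a chain map from $\big( C^\ast_\mathsf{H}(B,A), d_\mathsf{H} \big)$ to $\big( C^\ast_\mathsf{CE}(B,A), \delta_\mathsf{CE} \big)$; that is, $\delta_\mathsf{CE} \circ S_n = S_{n+1} \circ d_\mathsf{H}$.

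Finally I would settle the sign bookkeeping to pass from the Hochschild/Chevalley-Eilenberg differentials to the intrinsic $T$-differentials. Since $S_n$ preserves the cochain degree and both twisted differentials differ from their untwisted counterparts by the same factor $(-1)^n$ in degree $n$, the relation $\delta_\mathsf{CE} \circ S_n = S_{n+1} \circ d_\mathsf{H}$ gives at once $\delta_T \circ S_n = (-1)^n \delta_\mathsf{CE} \circ S_n = (-1)^n S_{n+1} \circ d_\mathsf{H} = S_{n+1} \circ d_T$. Thus $\{ S_n \}$ is a chain map for the $T$-differentials and therefore descends to a well-defined morphism $H^\ast_T (B,A) \to H^\ast_T (B_c, A_c)$. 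I do not anticipate a substantial obstacle here: the only point demanding care is verifying that $(B, [~,~]_T)$ and $(A, \rho^T)$ really are the commutator structures built from $(B, \cdot_T)$ and $(A, l^T, r^T)$, and once this matching is in place the conclusion is immediate from Proposition \ref{sn-map} together with the elementary sign cancellation above.
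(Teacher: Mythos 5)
Your proposal is correct and follows essentially the same route as the paper: both identify $H^\ast_T(B,A)$ with the Hochschild cohomology of $(B,\cdot_T)$ valued in $(A, l^T, r^T)$, identify $H^\ast_T(B_c,A_c)$ with the Chevalley-Eilenberg cohomology of the commutator data $(B,[~,~]_T)$ and $(A,\rho^T)$, and then invoke Proposition \ref{sn-map}. Your explicit verification of the sign cancellation $\delta_T \circ S_n = S_{n+1}\circ d_T$ is a detail the paper leaves implicit, but it is the same argument.
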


\medskip

\noindent {\bf Acknowledgements.}  The research is supported by the fellowship of Indian Institute of Technology (IIT) Kanpur. The author thanks the Institute for support.

\medskip

\medskip

%\noindent {\bf Data Availability Statement.} Data sharing is not applicable to this article as no new data were created or analyzed in this study.


\begin{thebibliography}{BFGM03}

\bibitem{aguiar} M. Aguiar, Infinitesimal Hopf algebras, {\em Contemp. Math.} 267 (2000), 1-29.

%\bibitem{ammar-poncin} M. Ammar and N. Poncin, Coalgebraic approach to the Loday Infinity Category, Stem differential for $2n$-ary graded and homotopy algebras, {\em Ann. Inst. Fourier, Grenoble} 60 (1) (2010), 355-387.

\bibitem{atkinson} F. V. Atkinson, Some aspects of Baxter's functional equation, {\em J. Math. Anal. Appl.} 7 (1963), 1-30.

%\bibitem{ayupov} Sh. A. Ayupov and B. A. Omirov, On Leibniz algebras, In: Khakimdjanov, Y., Goze M., Ayupov Sh. (eds.)., {\em Algebra and Operator Theory Proceedings of the Colloquim in Tashkent.} Dordrecht: Springer. pp. 1-13 (1998).

\bibitem{bai-spl} C. Bai, O. Bellier, L. Guo and X. Ni, Splitting of operations, Manin products and Rota-Baxter operators, {\em Int. Math. Res. Not. IMRN} 2013 (2013), 485-524.

\bibitem{brz} T. Brzezi\'{n}ski, Rota-Baxter systems, dendriform algebras and covariant bialgebras, {\em J. Algebra} 460 (2016), 1-25.

\bibitem{guo-lax} C. Bai, L. Guo and X. Ni, Nonabelian generalized Lax pairs, the classical Yang-Baxter equation and PostLie algebras, {\em Comm. Math. Phys.} 297 (2010), 553-596.

%\bibitem{bala-leib} D. Balavoine,  D\'{e}formations des alg\'{e}bres de Leibniz, {\em C. R. Acad. Sci. Paris S\'{e}r. I Math.} 319 (1994), no. 8, 783-788.

\bibitem{bala} D. Balavoine, Deformation of algebras over a quadratic operad, {\em Operads: Proceedings of Renaissance Conferences (Hartford, CT/ Luminy, 1995),} 207-234, Contemp. Math., 202, {\em Amer. Math. Soc., Providence, RI}, 1997.

\bibitem{baxter} G. Baxter, An analytic problem whose solution follows from a simple algebraic identity, {\em Pacific J. Math.} 10 (1960), 731-742.

%\bibitem{bloh} A. Bloh, A generalization of the concept of a Lie algebra, {\em Dokl. Akad. Nauk SSSR} 165 (3) (1965), 471-473.

%\bibitem{bloh1} A. Bloh, Cartan-Eilenberg homology theory for a generalized class of Lie algebras,  {\em Dokl. Akad. Nauk SSSR} 175 (8) (1967), 824-826.

%\bibitem{branes}  D. W. Branes, On Levi's Theorem for Leibniz algebras, {\em Bull. Aust. Math. Soc.} 86 (2012), no. 2, 184-185.

%\bibitem{burde} D. Burde, Left-symmetric algebras, or pre-Lie algebras in geometry and physics, {\em Centr. Eur. J. Math.} 4 (2006) 323-357.

\bibitem{cartier} P. Cartier, On the structure of free Baxter algebras, {\em Adv. Math.} 9 (1972) 253-265.

%\bibitem{connes} A. Connes and D. Kreimer, Renormalization in quantum field theory and the Riemann-Hilbert problem. I. The Hopf algebra structure of graphs and the main theorem, {\em Comm. Math. Phys.} 210 (2000), no. 1, 249-273.

%\bibitem{livernet} F. Chapoton and M. Livernet, Pre-Lie algebras and the rooted trees operad, {\em International Mathematics Research Notices}, Volume 2001, Issue 8 (2001) 395-408.

\bibitem{connes} A. Connes and D. Kreimer, Renormalization in quantum field theory and the Riemann-Hilbert problem. I. The Hopf algebra structure of graphs and the main theorem, {\em Comm. Math. Phys.} 210 (2000), 249-273. 

\bibitem{das-rota} A. Das, Deformations of associative Rota-Baxter operators, {\em J. Algebra} 560 (2020) 144-180.

%\bibitem{das-dend} A. Das, Cohomology and deformations of dendriform algebras, and $\mathrm{Dend}_\infty$-algebras, {\em arXiv:1903.11802}, submitted for publication

%\bibitem{das-jpaa} A. Das, Deformations of Loday-type algebras and their morphisms, {\em J. Pure Appl. Algebra} Vol. 225, Issue 6 (2021) 106599.

\bibitem{das-jmp} A. Das, Twisted Rota-Baxter operators, Reynolds operators and NS-Lie algebras, {\em J. Math. Phys.} to appear, arXiv:2009.09368v1

%\bibitem{dz} A. Dzhumadil'daev, Cohomologies and deformations of right-symmetric algebras, {\em J. Math. Sci.} vol. 93, no. 6 (1999), 836-876.

\bibitem{gers2} M. Gerstenhaber, The cohomology structure of an associative ring, {\em Ann. of Math.} (2) 78 (1963), 267-288.

\bibitem{gers} 
M. Gerstenhaber, On the deformation of rings and algebras, {\em Ann. of Math.} (2) 79 (1964), 59-103.

%\bibitem{getzler} E. Getzler, Lie theory for nilpotent $L_\infty$-algebras, {\em Ann. of Math. (2)} 170 (2009), no. 1, 271-301.

\bibitem{gon} M. E. Goncharov and P. S. Kolesnikov, Simple finite-dimensional double algebras, {\em J. Algebra} 500 (2018), 425-438.

\bibitem{guba} M. Goncharov and V. Gubarev, Rota-Baxter operators of nonzero weight on the matrix algebra of order three, {\em Linear Multilin. Algebra}, DOI: 10.1080/03081087.2020.1751036

\bibitem{guo-book} L. Guo, An introduction to Rota-Baxter algebra, Surveys of Modern Mathematics, 4. {\em International Press, Somerville, MA; Higher Education Press, Beijing,} 2012.

\bibitem{sheng-adv} L. Guo, H. Lang and Y. Sheng, Integration and geometrization of Rota-Baxter Lie algebras, {\em Adv. Math.} 387 (2021), 107834.

%\bibitem{khuda-poncin-qiu} D. Khudaverdyan, N. poncin and J. Qiu, On the infinity category of homotopy Leibniz algebras, {\em Theory Appl. Categ.} 29 (2014) 332-370.

\bibitem{sheng-w1} J. Jiang, Y. Sheng and C. Zhu, Cohomologies of relative Rota-Baxter operators on Lie groups and Lie algebras, arXiv:2108.02627v1

\bibitem{kuper} B. A. Kupershmidt, What a classical r-matrix really is, {\em J. Nonlinear Math. Phys.} 6 (1999), no. 4, 448-488. 

%\bibitem{lada-markl} T. Lada and M. Markl, Strongly homotopy Lie algebras, {\it Comm. Algebra} 23 (1995), no. 6, 2147-2161.

%\bibitem{lada-stasheff} T. Lada and J. Stasheff, Introduction to SH Lie algebras for Physicists, {\it Internat. J. Theoret. Phys.} 32 (1993), no. 7, 1087-1103.

%\bibitem{loday-une} J.-L. Loday, Une version non commutative des alg\`{e}bres de Lie: les alg\`{e}bres de Leibniz, {\em Enseign. Math. (2)} 39 (1993), no. 3-4, 269-293.

%\bibitem{loday-pira} J.-L. Loday and T. Pirashvili, Universal enveloping algebras of Leibniz algebras and (co)homology, {\em Math. Ann.} 296 (1993), no. 1, 139-158.

%\bibitem{manchon} D. Manchon, A short survey on pre-Lie algebras, {\em Noncommutative geometry and Physics: renormalization, motives, index theory}, ESI Lect. Math. Phys., Eur. Math. Soc., Z\"{u}rich, 2011, pp. 89-102.

%\bibitem{mandal} A. Mandal, Deformations of Leibniz algebra morphisms, {\em Homology Homotopy Appl.} 9 (2007) 439-450.

%\bibitem{markl} M. Markl,  Intrinsic brackets and the $L_\infty$-deformation theory of bialgebras, {\em J. Homotopy Relat. Struct.} 5(1) (2010) 177-212.

\bibitem{miller} J. B. Miller, Some properties of Baxter operators, {\em Acta Math. Acad. Sci. Hungar.} 17 (1966), 387-400.

\bibitem{nij-ric}
A. Nijenhuis and R. Richardson, Deformation of Lie algebra structures, {\em J. Math. Mech.} 17 (1967), 89-105.

\bibitem{rota} G.-C. Rota, Baxter algebras and combinatorial identities, I, II, {\em Bull. Amer. Math. Soc. 75 (1969), 325-329; ibid.} 75 1969 330-334.

%\bibitem{sheng-pl} Y. Sheng and R. Tang, Leibniz bialgebras, relative Rota-Baxter operators and the classical Leibniz Yang-Baxter equation, {\sf arXiv:1902.03033v2} (2019).

\bibitem{tang} R. Tang, C. Bai, L. Guo and Y. Sheng, Deformations and their controlling cohomologies of $\mathcal{O}$-operators, {\em Comm. Math. Phys.} 368 (2019), no. 2, 665-700.

\bibitem{uchino}  K. Uchino, Quantum analogy of Poisson geometry, related dendriform algebras and Rota-Baxter operators, {\em Lett. Math. Phys.} 85 (2008), no. 2-3, 91-109. 

%\bibitem{uchi} K. Uchino, Derived bracket and sh Leibniz algebras, {\em J. Pure Appl. Algebra} 215 (2011), 1102-1111.

\bibitem{voro} Th. Voronov, Higher derived brackets and homotopy algebras, {\em J. Pure Appl. Algebra} 202 (2005), 133-153.

%\bibitem{yau} D. Yau, Gerstenhaber structure and Deligne's conjecture for Loday algebras, {\em J. Pure Appl. Algebra}  209 (2007), 739-752.

%\bibitem{zhang-gao-guo} T. Zhang, X. Gao and L. Guo, Reynolds operators and their free objects from bracketed words and rootes trees, {\em J. Pure Appl. Algebra} 225 (2021), no. 12, 106766.

\bibitem{forest} Y. Zhang, D. Chen, X. Gao and Y. Luo, Weighted infinitesimal unitary bialgebras on rooted forests and weighted cocycles, {\em Pacific J. Math.} 302 (2019), 741-766.

\bibitem{aybe} Y. Zhang, X. Gao and J. Zheng, Weighted infinitesimal unitary bialgebras on matrix algebras and weighted associated Yang-Baxter equations, arXiv:1811.00842

\end{thebibliography}
\end{document}